\newcommand{\C}{\mathbb C}
\newcommand{\catqot}{/\hskip-3pt/}
\newcommand{\E}{\mathcal{E}}
\newcommand{\F}{\mathcal{F}}
\newcommand{\G}{\mathcal{G}}
\newcommand{\GL}{\mathop{\rm GL}}
\newcommand{\id}{\mathop{\rm id}}
\renewcommand{\L}{\mathcal{L}}
\newcommand{\la}{\lambda}
\newcommand{\lma}{\longmapsto}
\newcommand{\lra}{\longrightarrow}
\newcommand{\Oh}{\mathcal{O}}
\newcommand{\ol}{\overline}
\newcommand{\q}{\quad}
\newcommand{\Q}{\mathbb Q}
\newcommand{\R}{\mathbb R}
\newcommand{\SL}{\mathop{\rm SL}}
\newcommand{\ul}{\underline}
\newcommand{\Z}{\mathbb Z}
\renewcommand{\theta}{\vartheta}
\theoremstyle{remark}
\newtheorem{Ex}{Example}[section]
\newtheorem{Rem}[Ex]{Remark}
\theoremstyle{definition}
\theoremstyle{plain}
\newtheorem{Prop}[Ex]{Proposition}
\newtheorem{Lem}[Ex]{Lemma}
\newtheorem{Cor}[Ex]{Corollary}
\newtheorem{Thm}[Ex]{Theorem}
\begin{document}
\title{Singular principal bundles on reducible nodal curves}
\author{\'Angel Luis Mu\~noz Casta\~neda}
\address{Universidad de Le\'on\\ Departamento de Mathem\'aticas\\ Campus de Vegazana, s/n\\
Le\'on 24071\\ Spain}
\email{amunc@unileon.es}
\author{Alexander H.\ W.\ Schmitt}
\address{Freie Universit\"at Berlin\\Institut f\"ur Mathematik\\ Arnimallee 3\\ D-14195 Berlin\\ Germany}
\email{alexander.schmitt@fu-berlin.de}
\begin{abstract}
Studying degenerations of moduli spaces of semistable principal bundles on smooth curves leads to the problem of studying moduli spaces on singular curves. In this note, we see that the moduli spaces of $\delta$-semistable pseudo bundles on a nodal curve become, for large values of $\delta$, the moduli spaces for semistable singular principal bundles. The latter are reasonable candidates for degenerations and a potential basis of further developments as on irreducible nodal curves. In particular, we find a notion of semistability for principal bundles on reducible nodal curves. The understanding of the asymptotic behavior of $\delta$-semistability rests on a lemma from geometric invariant theory. The results allow the construction of a universal moduli space of semistable singular principal bundles over the space of stable curves.
\end{abstract}
\maketitle
\section*{Introduction.}


The moduli space ${\mathcal U}_C(r,d)$ of semistable vector bundles of rank $r$ and degree $d$ on the smooth projective curve $C$ over the complex numbers is a basic object of modern algebraic geometry. The curve $C$ varies itself in the moduli space ${\mathcal M}_g$ of smooth curves of genus $g$, $g:=h^1(C,\Oh_C)$. There exists a moduli space ${\mathcal U}_g(r,d)\lra {\mathcal M}_g$, such that the fiber over the isomorphism class $[C']$ of a smooth curve $C'$ is isomorphic to ${\mathcal U}_{C'}(r,d)/{\rm Aut}(C')$. The moduli space ${\mathcal M}_g$ can be compactified to the moduli space $\ol{\mathcal M}_g$ of stable curves of genus $g$ \cite{DM,Gies}. Pandharipande \cite{Pan} extended ${\mathcal U}_g(r,d)\lra {\mathcal M}_g$ to a projective moduli space $\ol{\mathcal U}_g(r,d)\lra \ol{\mathcal M}_g$. The construction of this space is based on Seshadri's theory of semistable torsion free sheaves on polarized curves with ordinary double points, so-called nodal curves (\cite{Sesh}, huiti\`eme partie). It is worth noting that the compactification $\overline{\mathcal U}_g(r,d)$ is related to the study of moduli spaces of semistable vector bundles on smooth curves via degeneration techniques as in \cite{Gies2} and \cite{TiB}.  
\par 
The theory of vector bundles of rank $r$ is equivalent to the theory of principal $\GL_r$-bundles. Now, one may start with another reductive structure group $G$ and principal $G$-bundles on a smooth projective curve. The notion of semistability for these objects and the construction of moduli spaces is due to Ramanathan \cite{Ram}. Again, one may study degenerations of these moduli spaces when $C$ moves along with $\ol{\mathcal M}_g$.
We refer to \cite{Fal}, \cite{SchSurv}, \cite{Sesh2}, and \cite{Sol} for some work on the problem of degenerating moduli spaces of principal $G$-bundles on smooth curves. Recently, important progress was made by Balaji in \cite{Bal}. He found an intrinsic notion of semistability for so-called laced torsors (\cite{Bal}, Definition 12.5) (Interestingly, this notion can be phrased in the terms that we are using in this note (\cite{Bal}, Theorem 12.13), so that the situation becomes somewhat similar to the one on smooth curves). Furthermore, Balaji's moduli spaces provide flat degenerations of moduli spaces of semistable principal bundles on smooth curves (\cite{Bal}, Theorem 1.1). 

In this note, we will pursue the approach discussed in \cite{SchSurv}. The first author has been successful in generalizing many results from irreducible nodal curves to arbitrary nodal curves \cite{MunThesis,MunPap}. The objects we are dealing with are called pseudo $G$-bundles\footnote{We use the terminology of \cite{SchNod2} rather than of \cite{MunPap} in order to shorten the technical terms.} (see Section \ref{sect:SingPrinz}). These are special instances of decorated torsion free sheaves and, as such, they have a quite natural notion of semistability which depends on a positive rational number $\delta$. The key is to understand the objects which are semistable for large values of the stability parameter $\delta$. The second author proved that the moduli space of $\delta$-semistable pseudo $G$-bundles is isomorphic to the moduli space of principal $G$-bundles constructed by Ramanathan provided $\delta$ is large enough \cite{SchMani}. Furthermore, in case the base curve is an irreducible nodal curve and if $\delta$ is large enough, every $\delta$-semistable pseudo $G$-bundle determines a principal $G$-bundle over a generic open subset \cite{Bhosle2,SchNod2}.
We will present a lemma from geometric invariant theory (Lemma \ref{lem:GIT}) which is essential for understanding the asymptotic behavior of $\delta$-semistability on reducible curves. Then, we will adapt the techniques from \cite{GLSS} to prove a crucial boundedness result and give a description of asymptotic semistability (Theorem \ref{thm:MainAux} and Theorem \ref{thm:MainAux2}). 
In the final section, we will apply the results to pseudo bundles on reducible nodal curves. We will see that the moduli spaces constructed in \cite{MunPap} parameterize, for large values of the stability parameter, objects which are principal bundles on a dense open subset of the curve (which are called singular principal bundles) and satisfy a nice semistability condition (Theorem \ref{thm:moduli-stb}). In particular, we find a notion of semistability for (genuine) principal bundles on reducible nodal curves which depends on a faithful representation of the structure group. To our knowledge, such a notion has not appeared in the literature, yet. We will also prove that, for a faithful representation of the structure group $G$ with values in a symplectic or an orthogonal group, a singular principal $G$-bundle will be a principal $G$-bundle on the smooth locus of the curve $X$ (Theorem \ref{thm:good-choice}), generalizing the findings of \cite{SchNod2}.
\par
For a fixed value $\delta\in\Q_{>0}$ and a fixed faithful representation $\sigma\colon G\lra \SL(V)$ of the structure group $G$, the first author has constructed in \cite{MunUniv} a relative moduli space $\Theta\colon {\mathcal S}{\mathcal P}{\mathcal B}(\sigma)^{\delta\hbox{-}\rm ss}_{g}\lra \ol{\mathcal M}_g$, such that the fiber of $\Theta$ over the isomorphy class of a stable curve $C$ is isomorphic to ${\mathcal S}{\mathcal P}{\mathcal B}(\sigma)^{\delta\hbox{-}\rm ss}_{C,\chi}/{\rm Aut}(C)$, ${\mathcal S}{\mathcal P}{\mathcal B}(\sigma)^{\delta\hbox{-}\rm ss}_{C,\chi}$ the moduli space constructed in \cite{MunUniv}, for $\chi=\dim_\C(V)\cdot (1-g)$. The improved estimates of the present note are used to show that there is a rational number $\delta_\infty\in \Q_{>0}$, such that, for every $\delta>\delta_\infty$, 
${\mathcal S}{\mathcal P}{\mathcal B}(\sigma)^{\delta\hbox{-}\rm ss}_{g}$ parameterizes pairs given by a stable curve of genus $g$ and a semistable singular principal $G$-bundle. This means that, for $\delta>\delta_\infty$ and a smooth projective curve $C$, $\Theta^{-1}([C])$ is the moduli space of semistable principal $G$-bundles on $C$, divided by the action of the finite group ${\rm Aut}(C)$, and, for $\delta>\delta_\infty$ and a singular projective curve $C$, $\Theta^{-1}([C])$ is the moduli space for semistable singular principal $G$-bundles, divided by ${\rm Aut}(C)$. If, in addition, we choose $\sigma$ to have values in a symplectic or an orthogonal group, the latter moduli space parameterizes objects which are principal $G$-bundles away from the nodes. So, ${\mathcal S}{\mathcal P}{\mathcal B}(\sigma)^{\delta\hbox{-}\rm ss}_{g}$ is, for $\delta>\delta_\infty$, a reasonable generalization of Pandharipande's moduli space \cite{Pan}.
\par 
To conclude, we note that compactifications of the universal moduli space of semistable principal $\SL_r(\C)$-bundles over $\ol{\mathcal M}_g$ appeared in the context of conformal blocks. In fact, Manon \cite{Man} and Belkale/Gibney \cite{BG} constructed such compactifications, using algebras of conformal blocks. They did not give a modular interpretation (see, however, \cite{BG}, Section 11.2), and it would be interesting to relate our moduli spaces to these compactifications. The arguments by Belkale and Gibney (e.g., Section 3 of \cite{BG}) and the work of Balaji (\cite{Bal}, in particular Section 12) show that the moduli spaces we propose to construct may be useful even if they have the drawback of depending on the choice of a faithful representation of the structure group. It would be interesting to see to which extent the results of \cite{Bal} generalize to reducible curves, e.g., whether there is a Hilbert compactification as in \cite{SchHilb}.

\section{A lemma from geometric invariant theory}

Let ${\mathbb K}$ be a field. Suppose we are given tuples $\ul r=(r_1,...,r_t)$ and $\ul \ell=(\ell_1,...,\ell_t)$ of positive integers. We associate with these data the embedding
\begin{equation}
\label{eq:BlockEmb}
\iota_{\ul\ell}\colon {\GL}_{r_1}({\mathbb K})\times\cdots\times {\GL}_{r_t}({\mathbb K})\hookrightarrow {\GL}_R({\mathbb K}),\q R:=\ell_1\cdot r_1+\cdots+\ell_t\cdot r_t,
\end{equation}
which maps a tuple $\ul g=(g_1,...,g_t)$ to the block diagonal matrix in which the first $\ell_1$ blocks are copies of $g_1$, the blocks $\ell_1+1$ up to $\ell_1+\ell_2$ are copies of $g_2$, and so on. Define
\[
H :=\iota_{\ul\ell}^{-1}({\SL}_R({\mathbb K})).
\]
\par 
Assume further that $\varrho_i\colon \GL_{r_i}({\mathbb K})\lra \GL(W_i)$ is a homogeneous representation and let $h_i$ be its degree, $i = 1,...,t$. Note that the representation $\varrho_i$ may be viewed as a representation of ${\GL}_{r_1}({\mathbb K})\times\cdots\times {\GL}_{r_t}({\mathbb K})$ on $W_i$, $i=1,...,t$. The direct sum of these representations is a representation of ${\GL}_{r_1}({\mathbb K})\times\cdots\times {\GL}_{r_t}({\mathbb K})$ on
\[
W:= W_1\oplus\cdots\oplus W_t.
\]
By means of restriction, we get the representation $\varrho\colon H\lra \GL(W)$. We would like to investigate the notion of $\varrho$-semistability on $W$.
\begin{Lem}
\label{lem:GIT}
Assume that the field ${\mathbb K}$ is perfect and that $h_i$ is positive, $i=1,...,t$. Let $w = (w_1 , ... , w_t)\in W\setminus \{0\}$. Then, the following assertions are equivalent.
\begin{itemize}
\item[\rm i)] The point $w$ is $\varrho$-semistable.
\item[\rm ii)] We have $w_i\neq 0$, and $w_i$ is $\widetilde{\varrho}_i$-semistable, $\widetilde{\varrho}_i:=\varrho_{i|{\SL}_{r_i}({\mathbb K})}$, $i=1,...,t$.
\end{itemize}
\end{Lem}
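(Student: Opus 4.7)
My approach is a direct application of the Hilbert--Mumford criterion. The starting point is a concrete description of 1-PSs of $H$: after passing to a suitable common positive power (which does not alter the sign of any $\mu$-weight), each such $\ul\lambda = (\lambda_1,\ldots,\lambda_t)$ splits as $\lambda_i = \lambda_i' \cdot c_i$, with $\lambda_i'$ a 1-PS of $\SL_{r_i}({\mathbb K})$ and $c_i(t) = t^{a_i}\cdot \id_{r_i}$ a central 1-PS ($a_i \in \Q$); the condition $\iota_{\ul\ell}(\ul\lambda) \subset \SL_R$ reads $\sum_{i=1}^t \ell_i r_i a_i = 0$. Because $\varrho_i$ is homogeneous of degree $h_i$, the scalar $c_i(t)$ acts on $W_i$ by $t^{h_i a_i}$, so the $\ul\lambda$-weights on $W_i$ are exactly $h_i a_i + \gamma$, with $\gamma$ a $\lambda_i'$-weight on $W_i$. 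With the usual convention $\mu(v,\lambda) = -\min\{\gamma : v_\gamma\neq 0\}$, this yields the master identity
\[
\mu^\varrho(w,\ul\lambda) \;=\; -\min_{i\colon w_i\neq 0}\bigl(h_i a_i - \mu^{\widetilde\varrho_i}(w_i,\lambda_i')\bigr).
\]

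For the implication (ii)$\Rightarrow$(i), I would observe that $\sum \ell_i r_i a_i = 0$ with all $\ell_i, r_i > 0$ forces some $a_{i_0} \leq 0$; combined with $h_{i_0} > 0$ and the semistability hypothesis $\mu^{\widetilde\varrho_i}(w_i,\lambda_i') \geq 0$, the master formula immediately gives $\mu^\varrho(w,\ul\lambda) \geq 0$, so $w$ is $\varrho$-semistable.

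For the converse (i)$\Rightarrow$(ii), I would argue by contrapositive in two stages. If some $w_j = 0$, I produce a destabilizing \emph{central} 1-PS by choosing $a_i$ of uniform positive sign away from $j$ and balancing with a single negative $a_j$: concretely, $a_j = -\sum_{i\neq j} r_i\ell_i$ and $a_i = r_j\ell_j$ for $i \neq j$. Since the index $j$ is excluded from the master min (because $w_j = 0$), every surviving summand equals $h_i r_j \ell_j > 0$, whence $\mu^\varrho(w,\ul\lambda) < 0$. If instead all $w_i\neq 0$ but some $w_{j_0}$ fails to be $\widetilde\varrho_{j_0}$-semistable, I pick $\lambda_{j_0}'$ realising $\mu^{\widetilde\varrho_{j_0}}(w_{j_0},\lambda_{j_0}') = -q_0 < 0$, and combine a large power $N\lambda_{j_0}'$ with the same central recipe (this time $a_{j_0}$ slightly negative and the other $a_i$ positive, still satisfying $\sum \ell_i r_i a_i = 0$), so that for $N\gg 0$ the $j_0$-th summand $h_{j_0} a_{j_0} + Nq_0$ becomes positive while the summands $h_i a_i$ for $i\neq j_0$ are already positive; again $\mu^\varrho(w,\ul\lambda)<0$.

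The main obstacle I anticipate is purely bookkeeping: tracking signs in the $\mu$-weight formula, ensuring the perturbations really land inside $H$ rather than merely in $\GL_{r_1}\times\cdots\times\GL_{r_t}$, and invoking the hypothesis $h_i > 0$ at the right moments (without it, central 1-PSs could no longer detect vanishing of $w_j$ nor balance the destabilizing direction in the second case). The perfectness of ${\mathbb K}$ is a standard background assumption ensuring that Hilbert--Mumford and Kempf's optimal-destabilization theory apply cleanly after base change to $\overline{\mathbb K}$.
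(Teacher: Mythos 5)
Your proof is correct and follows essentially the same route as the paper's: decompose a one parameter subgroup of $H$ into $\SL_{r_i}$-parts plus central parts constrained by the determinant-one condition, derive the combined weight formula, use the forced sign of some central exponent for (ii)$\Rightarrow$(i), and build explicit destabilizing central (resp.\ rescaled) one parameter subgroups for the two failure modes in (i)$\Rightarrow$(ii). The only differences are cosmetic: your $\mu$-sign convention is opposite to the paper's, your balancing constraint carries the factor $r_i$ explicitly, and in the second case you scale the $\SL$-part up by $N$ rather than shrinking the central perturbation, all of which are equivalent.
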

\begin{Rem}
\label{rem:AssField}
i) The statement of the lemma holds also true, if the degree $h_i$ is negative, for $i=1,...,t$.
\par 
ii) Observe that the notion of $\varrho$-semistability does not depend on the tuple $\ul\ell=(\ell_1,...,\ell_t)$ of positive integers which enters the definition of $H$.
\end{Rem}
Let $r$ be a positive number. By ${\rm det}\colon \GL_r({\mathbb K})\lra {\mathbb G}_m({\mathbb K})$, we denote the determinant homomorphism. We introduce the one parameter subgroup
\begin{eqnarray*}
\zeta\colon {\mathbb G}_m({\mathbb K}) &\lra& {\GL}_r({\mathbb K})
\\
z &\lma& z\cdot E_r.
\end{eqnarray*}
For a one parameter subgroup $\la\colon {\mathbb G}_m({\mathbb K})\lra \GL_r({\mathbb K})$, we get the homomorphism $\det\circ \la\colon {\mathbb G}_m({\mathbb K})\lra {\mathbb G}_m({\mathbb K})$. It is of the form $z\lma z^\gamma$, $z\in {\mathbb G}_m({\mathbb K})$, for some integer $\gamma\in\Z$. In the Hilbert--Mumford criterion, we may always replace $\la$ by a positive multiple, so that we may assume that $\gamma$ is divisible by $r$. Then, written in additive notation,
\[
\widetilde{\la}:=\la-\frac{\gamma}{r}\cdot \zeta
\]
is a one parameter subgroup of $\SL_r({\mathbb K})$. So, we may view a one parameter subgroup $\la$ of $\GL_r({\mathbb K})$ as a pair $(\widetilde{\la},\delta)$, consisting of a one parameter subgroup $\widetilde{\la}$ of $\SL_r({\mathbb K})$ and an integer $\delta\in\Z$. Finally, let $\varrho\colon \GL_r({\mathbb K})\lra \GL(W)$ be a homogeneous representation, $h$ its degree, and $\widetilde{\varrho}:=\varrho_{|\SL_r({\mathbb K})}$ its restriction to the special linear group. For a point $w\in W$ and a one parameter subgroup $\la=(\widetilde{\la},\delta)$ of $\GL_r({\mathbb K})$, we find the formula 
\begin{equation}
\label{eq:SLToGL}
\mu_\varrho(\la,w)=\mu_{\widetilde{\varrho}}(\widetilde{\la},w)+h\cdot \delta.
\end{equation}
\begin{proof}[Proof of Lemma {\rm \ref{lem:GIT}}] 
Note that the assumption on ${\mathbb K}$ implies that semistability may be tested with the Hilbert--Mumford criterion (see \cite{SchBook}, Section 1.7.1, for a brief discussion). 
\par 
A one parameter subgroup of $H$ is a tuple $\nu=(\nu_1,...,\nu_t)$ with $\nu_i=(\widetilde{\nu}_i,\delta_i)$ a one parameter subgroup of $\GL_{r_i}({\mathbb K})$, $i=1,...,t$, such that 
\begin{equation}
\label{eq:NullRel}
\ell_1\cdot \delta_1+\cdots+\ell_t\cdot \delta_t=0.
\end{equation}
We will also use the obvious formula
\begin{equation}
\label{eq:Obviou}
\mu_\varrho(\nu,w)=\max\bigl\{\, \mu_{\varrho_i}(\nu_i,w_i)\,|\,w_i\neq 0\,\bigr\}.
\end{equation}
\par 
{\bfseries ii) $\boldsymbol{\Longrightarrow}$ i).}  Let $\nu=(\nu_1,...,\nu_t)$, $\nu_i=(\widetilde{\nu}_i,\delta_i)$, $i=1,...,t$, be a one parameter subgroup of $H$. Because of (\ref{eq:NullRel}), there is an index $i_0$ with $\delta_{i_0}\ge 0$. Since we assume that $w_{i_0}$ is $\widetilde{\varrho}_{i_0}$-semistable, we have 
\[
\mu_{\widetilde{\varrho}_{i_0}}(\widetilde{\nu}_{i_0}, w_{i_0})\ge 0.
\]
So, using (\ref{eq:Obviou}) and (\ref{eq:SLToGL}), we see
\[
\mu_\varrho(\nu,w)\ge \mu_{\widetilde{\varrho}_{i_0}}(\widetilde{\nu}_{i_0}, w_{i_0})+h_{i_0}\cdot \delta_{i_0}\ge 0.
\]
The Hilbert--Mumford criterion shows that $w$ is $\varrho$-semistable.
\par 
{\bfseries i) $\boldsymbol{\Longrightarrow}$ ii).} Assume first that $w_1=0$. Since $w\neq 0$, there is an index $i_0\in\{\, 2,...,t\,\}$ with $w_{i_0}\neq 0$. Let us consider the case $i_0=2$. Set $\delta_1:=\ell_2$, $\delta_2:=-\ell_1$, and $\delta_i=0$, $i=3,...,t$. Then, (\ref{eq:NullRel}) is satisfied. Next, define $\nu_i:=(0,\delta_i)$, $i=1,...,t$, and $\nu=(\nu_1,...,\nu_t)$. This is a one parameter subgroup of $H$ with 
\[
\mu_\varrho(\nu,w)=h_2\cdot \delta_2=-h_2\cdot \ell_1<0.
\]
Now, suppose that $w_1\neq 0$ and that $w_1$ is not $\widetilde{\varrho}_1$-semistable. Pick a one parameter subgroup $\widetilde{\nu}_1$ of $\SL_{r_1}({\mathbb K})$ with $\mu_{\widetilde{\varrho}_1}(\widetilde{\nu}_1,w_1)<0$, a positive rational number $\eta_1$, such that
\[
\mu_{\widetilde{\varrho}_1}(\widetilde{\nu}_1,w_1)+h_1\cdot \eta_1<0,
\]
and negative rational numbers $\eta_i$, $i=2,...,t$, with
\[
\ell_1\cdot \eta_1+\cdots+\ell_t\cdot \eta_t=0.
\]
Choose $s>0$, such that $\delta_i=s\cdot \eta_i$ is an integer, $i=1,...,t$, set $\nu_1:=(s\cdot \widetilde{\nu}_1, \delta_1)$, $\nu_i:=(0,\delta_i)$, $i=2,...,t$, and $\nu:=(\nu_1,...,\nu_t)$; this is a one parameter subgroup of $H$. Denoting by $j_1<\cdots<j_u$ the elements of $\{\, i\in\{\, 2,...,t\,\}\,|\, w_i\neq 0\,\}$ and using (\ref{eq:Obviou}) and (\ref{eq:SLToGL}), we deduce
\[
\mu_\varrho(\nu,w) =
\max\bigl\{\, s\cdot\mu_{\widetilde{\varrho}_1}(\widetilde{\nu}_1,w_1)+h_1\cdot\delta_1, h_{j_1}\cdot\delta_{j_1},...,h_{j_u}\cdot\delta_{j_u}\,\bigr\}<0.
\]
In both cases, we have found a contradiction to the hypothesis that $w$ be $\varrho$-semistable.
\end{proof}

\section{Parabolic swamps}\label{sec:ParSwa}

Let $C$ be a (possibly disconnected) reduced projective algebraic curve and $N:=\{\, \mu_1,\nu_1,\allowbreak ...,\mu_c,\nu_c\,\}$ a set of $2c$ distinct smooth points on $C$. The case $c=0$, i.e., $N=\varnothing$ is allowed. Finally, we fix an ample line bundle $\L$ on $C$. Define $\ul\ell=(\ell_1,...,\ell_t)$ with $\ell_i:=\deg(\L_{|C_i})$, $i=1,...,t$. For a coherent $\Oh_C$-module $\G$, the \it multirank \rm is defined as 
$\ul r(\G):=({\rm rk}(\G_{|C_1}),...,{\rm rk}(\G_{|C_t}))$, 
and the \it total rank \rm as ${\rm trk}(\G):=\sum\limits_{i=1}^t \ell_i\cdot {\rm rk}(\G_{|C_i})$. We say that $\G$ has \it uniform rank \rm $r$, if ${\rm rk}(\G_{|C_i})=r$, $i=1,...,t$.
\begin{Rem}
\label{rem:EnforceRiemannRoch}
If we define
\[
{\rm rk}_{\ul\ell}(\G):=\frac{{\rm trk}(\G)}{\sum\limits_{i=1}^t \ell_i}
\]
and 
\[
{\deg}_{\ul\ell}(\G)=\chi(X,\G)-{\rm rk}_{\ul\ell}(\G)\cdot \bigl(1-p_a(X)\bigr),
\]
then
\begin{itemize}
\item the formula ${\rm rk}_{\ul\ell}(\G)=r$ holds true, if $\G$ has uniform rank $r$ on $X$,
\item the Riemann--Roch formula $\chi(X,\G)={\deg}_{\ul\ell}(\G)+{\rm rk}_{\ul\ell}(\G)\cdot (1-p_a(X))$ is satisfied, by definition, and 
\item the degree behaves additively on short exact sequences of coherent $\Oh_X$-modules.
\end{itemize}
\end{Rem}
Let $\E$ be a torsion free coherent $\Oh_C$-module. A \it generalized parabolic structure \rm on $\E$ is the datum of a tuple $\ul q=(q_i\colon \E_{|\{\mu_i\}}\oplus \E_{|\{\nu_i\}}\lra R_i, i=1,...,c)$ in which $R_i$ is a complex vector space and $q_i$ is a surjective $\C$-linear map, $i=1,...,c$. The tuple $\ul t(\ul q):=(\dim_\C(R_i), i=1,...,c)$ is the \it type \rm of the generalized parabolic structure. A submodule $\F\subset \E$ is \it saturated\rm, if the quotient module $\E/\F$ is torsion free, as well. Suppose that $(\E,\ul q)$ is a torsion free $\Oh_C$-module which is endowed with a generalized parabolic structure and $\F$ is a saturated submodule. Then, $\ul q_\F=(q_{i,\F}, i=1,...,c)$ with 
\[
q_{i,\F}\colon \F_{|\{\mu_i\}}\oplus \F_{|\{\nu_i\}}\lra S_i,\q S_i:=q_i(\F_{|\{\mu_i\}}\oplus \F_{|\{\nu_i\}}),\q i=1,...,c,
\]
is a generalized parabolic structure on $\F$. Let us also fix a tuple $\ul\kappa=(\kappa_1,...,\kappa_c)$ of positive real numbers. For a torsion free $\Oh_C$-module $(\E,\ul q)$ with generalized parabolic structure, we set 
\[
\chi_{\ul\kappa}(\E,\ul q):=\chi(\E)-\sum_{i=1}^c \kappa_i\cdot \dim_\C(R_i).
\]
A \it weighted filtration \rm of $\E$ is a pair $(\E_\bullet, m_\bullet)$ which consists of a filtration
\[
0=:\E_0\subsetneq \E_1\subsetneq \cdots\subsetneq \E_s\subsetneq \E_{s+1}:=\E
\]
of $\E$ by saturated submodules and a tuple $m_\bullet=(m_1,...,m_s)$ of positive rational numbers. We define
\[
\chi_{\ul\kappa}(\E_\bullet, m_\bullet;\ul q):=\sum_{i=1}^s m_i\cdot \bigl(\chi_{\ul\kappa}(\E,\ul q)\cdot {\rm trk}(\E_i)-\chi_{\ul \kappa}(\E_i,\ul q_{\E_i})\cdot {\rm trk}(\E)\bigr).
\]
\par 
Next, let us also fix positive integers $a$ and $b$. Then, an \it $(a,b)$-GPS-swamp \rm or, simply, a \it GPS-swamp \rm is a tuple $(\E,\ul q,\varphi)$ which consists of a torsion free $\Oh_C$-module $\E$, a generalized parabolic structure $\ul q$ on $\E$, and a non-trivial homomorphism $\varphi\colon (\E^{\otimes a})^{\oplus b}\lra \Oh_C$. Fix the total rank $\alpha$ and set 
\begin{equation}
\Gamma^{(i)}:=\bigl(\underbrace{i-\alpha,...,i-\alpha}_{i\times}, \underbrace{i,...,i}_{(\alpha-i)\times}\bigr),\q i=1,...,\alpha-1.
\end{equation}
Suppose that $(\E,m_\bullet)$ is a weighted filtration of $\E$ and define
\[
\Gamma_\bullet:=(\Gamma_1,...,\Gamma_\alpha):=\sum_{i=1}^s m_i\cdot \Gamma^{({\rm trk}(\E_i))},\q
\Gamma(i):=\Gamma_{{\rm trk}(\E_i)},\q i=1,...,s+1,
\]
as well as 
\[
\mu(\E_\bullet, m_\bullet;\varphi):=-\min_{\{\, 1,...,s+1\}^{\times a}}\bigl\{\, \Gamma(i_1)+\cdots+\Gamma(i_a)\,|\, \varphi_{|(\E_{i_1}\otimes\cdots\otimes \E_{i_a})^{\oplus b}}\not\equiv 0\,\bigr\}.
\]
Given a positive rational number $\delta\in\Q_{>0}$, we say that the GPS-swamp $(\E,\ul q,\varphi)$ is \it $(\ul\kappa,\delta)$-(semi)stable\rm, if the inequality
\[
\chi_{\ul\kappa}(\E_\bullet,m_\bullet,\ul q)+\delta\cdot \mu(\E_\bullet, m_\bullet;\varphi) (\ge) 0
\]
is satisfied, for every weighted filtration $(\E_\bullet, m_\bullet)$ of $\E$.
\begin{Rem}
\label{rem:FinMan}
We fix the total rank $\alpha$. Introduce $R$ as the set of all tuples $\ul r=(r_1,...,r_{s+1})$ of natural numbers with $0<r_1<\cdots<r_s<r_{s+1}=\alpha$ and $s\ge 0$. For $\ul r=(r_1,...,r_{s+1})\in R$, let ${\mathcal C}(\ul r)\subset \R^\alpha$ be the cone spanned by $\Gamma^{(r_1)},...,\Gamma^{(r_s)}$. It consists of all vectors $(v_1,....,v_\alpha)$ that satisfy $v_1=\cdots=v_{r_1}\le v_{r_1+1}=\cdots=v_{r_2}\le v_{r_2+1}=\cdots \le v_{r_s+1}=\cdots=v_\alpha$ and $v_1+\cdots+v_\alpha=0$. For $s\in \{\, 0,...,\alpha-1\,\}$, let ${\mathcal P}(s)$ be the set of non-empty subsets of $\{\, 1,...,s+1\,\}^{\times a}$. Then, for $\ul r=(r_1,...,r_{s+1})\in R$ and $P\in {\mathcal P}(s)$,
\begin{eqnarray*}
\Phi(\ul r, P)\colon {\mathcal C}(\ul r) &\lra& \R
\\
(v_1,...,v_\alpha) &\lma& -\min\bigl\{\, v_{r_{i_1}}+\cdots+v_{r_{i_a}}\,|\, (i_1,...,i_a)\in P\,\bigr\}
\end{eqnarray*}
is a piecewise linear function. Note that the sets $R$ and ${\mathcal P}(1)\sqcup\cdots\sqcup {\mathcal P}(\alpha-1)$ are finite, so that the above rule creates only finitely many piecewise linear functions, and that, in the notation from above, $\mu(\E_\bullet, m_\bullet;\varphi)=\Phi(\ul r, P)(\Gamma_\bullet)$ for $\ul r=({\rm trk}(\E_1),...,{\rm trk}(\E_{s+1}))$ and $P:=\{\, (i_1,...,i_a)\in \{\, 1,...,s+1\,\}^{\times a}\,|\,\allowbreak \varphi_{|(\E_{i_1}\otimes\cdots\otimes \E_{i_a})^{\oplus b}}\not\equiv 0\,\}$.
\end{Rem} 
An $(a,b)$-GPS-swamp $(\E,\ul q,\varphi)$ is \it $\ul\kappa$-asymptotically (semi)stable\rm, if
\begin{itemize}
\item $\mu(\E_\bullet, m_\bullet;\varphi)\ge 0$ and
\item $\mu(\E_\bullet, m_\bullet;\varphi)=0$ implies $\chi_{\ul\kappa}(\E_\bullet, m_\bullet;\ul q)(\ge)0$,
\end{itemize}
for every weighted filtration $(\E_\bullet, m_\bullet)$ of $\E$.
\subsection*{The generic tensor field of a swamp. Generic semistability}
In this section, we would like to give an interpretation of the first condition in the definition of asymptotic (semi)stability in terms of geometric invariant theory.
\par 
Let $C$ be a curve as above and fix positive integers $r,a,b$. We let ${\mathbb K}_i$ be the function field of the irreducible component $C_i$, $\eta_i:={\rm Spec}({\mathbb K}_i)$ its generic point, $i=1,...,t$, and
\[
\Xi:=\{\eta_1\}\sqcup\cdots\sqcup \{\eta_t\}.
\]
Suppose $\E$ is a torsion free $\Oh_C$-module. The restriction of $\E$ to $\Xi$ is denoted by ${\mathbb E}$. We view ${\mathbb E}$ as the tuple $({\mathbb E}(1),...,{\mathbb E}(t))$ with ${\mathbb E}(i)=\E_{|\{\eta_i\}}$, $i=1,...,t$. Observe that ${\mathbb E}(i)$ is a ${\mathbb K}_i$-vector space, $i=1,...,t$. Introduce 
\[
W'_i:={\rm Hom}_{{\mathbb K}_i}\bigl(({\mathbb E}(i)^{\otimes a})^{\oplus b}, {\mathbb K}_i\bigr)
\]
and let $\varrho_i\colon \GL({\mathbb E}(i))\lra \GL(W'_i)$ be the natural representation of $\GL({\mathbb E}(i))$ on $W'_i$, $i=1,...,t$. 
\par 
Next, let $\varphi\colon (\E^{\otimes a})^{\oplus b}\lra\Oh_C$ be a tensor field on $\E$. The restriction of $\varphi$ to $\eta_i$ yields a point $w'_i\in W'_i$, $i=1,...,t$. Since the transcendence degree of ${\mathbb K}_i$ is one, we may present that field as an algebraic extension of $\C(t)$ and, therefore, embed it into the algebraic closure ${\mathbb K}$ of $\C(t)$, $i=1,...,t$. Define 
\[
W_i:=W_i'\mathop{\otimes}_{{\mathbb K}_i}{\mathbb K}
\]
and let $w_i\in W_i$ be the point defined by $w'_i\in W'_i$, $i=1,...,t$. The point 
\[
w=(w_1,...,w_t)\in W_1\oplus\cdots\oplus W_t=:W
\]
is called the \it generic tensor field \rm associated with $\varphi$. Define $\varrho\colon H\lra \GL(W)$ as the natural representation. If $w$ is $\varrho$-semistable, we say that $\varphi$ is \it generically semistable\rm.
\begin{Lem}
\label{lem:GenSemStab}
The condition that $\mu(\E_\bullet, m_\bullet;\varphi)\ge 0$ holds for every weighted filtration $(\E_\bullet, m_\bullet)$ is equivalent to the fact that $\varphi$ is generically semistable.
\end{Lem}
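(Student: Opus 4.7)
The plan is to set up a correspondence between weighted filtrations $(\E_\bullet,m_\bullet)$ of $\E$ and one-parameter subgroups $\nu$ of $H$ under which $\mu(\E_\bullet,m_\bullet;\varphi)=\mu_\varrho(\nu,w)$, and then to invoke the Hilbert--Mumford criterion together with Lemma~\ref{lem:GIT}.

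For the setup, I would restrict a weighted filtration $(\E_\bullet,m_\bullet)$ to the generic points $\eta_i$, producing flags in $\mathbb{E}(i)\otimes_{\mathbb{K}_i}\mathbb{K}$; these flags together with the weights $m_\bullet$ determine a one-parameter subgroup $\nu_i$ of $\GL(\mathbb{E}(i)\otimes\mathbb{K})$, and the built-in normalization that the weight vector $\Gamma_\bullet\in\R^\alpha$ sums to zero guarantees that $\nu=(\nu_1,\ldots,\nu_t)$ lies in $H$. The identity
\[
\mu(\E_\bullet,m_\bullet;\varphi)=\mu_\varrho(\nu,w)
\]
then follows by unwinding both sides: up to the reordering induced by $\iota_{\ul\ell}$, the entries of $\Gamma_\bullet$ are the weights of $\iota_{\ul\ell}(\nu)$ on $\mathbb{K}^\alpha$, and the tuples $(i_1,\ldots,i_a)$ with $\varphi_{|(\E_{i_1}\otimes\cdots\otimes\E_{i_a})^{\oplus b}}\not\equiv 0$ are precisely those for which $w$ has a non-zero component in the corresponding weight space of $W$. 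This immediately gives the direction $(\Leftarrow)$: generic semistability of $\varphi$ forces $\mu_\varrho(\nu,w)\ge 0$ for every $\nu$ in $H$, hence in particular $\mu(\E_\bullet,m_\bullet;\varphi)\ge 0$ for every weighted filtration arising from the construction above.

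For the converse direction, I would contrapose and apply Lemma~\ref{lem:GIT} to reduce to showing that each $w_i$ is non-zero and $\widetilde\varrho_i$-semistable. If $w_i=0$ for some $i$, I would take $\F$ to be the saturated subsheaf of $\E$ vanishing generically on every $C_k$ with $k\neq i$; only the tuple $(2,\ldots,2)$ then contributes to the minimum defining $\mu$, producing a strictly negative value. If some $w_i$ fails to be $\widetilde\varrho_i$-semistable, base-change invariance of semistability together with the perfectness of $\mathbb{K}_i$ lets me descend the destabilizing one-parameter subgroup to one defined over $\mathbb{K}_i$; the resulting weighted flag on $\mathbb{E}(i)$ extends to a weighted filtration of $\E$ by taking saturated subsheaves of full rank on each $C_k$, $k\neq i$, and the identity above yields a destabilizing weighted filtration, contradicting the hypothesis. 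The principal technical obstacle is the bookkeeping behind the key identity, together with the verification that the saturated subsheaves with prescribed generic behavior genuinely exist inside $\E$ --- a point that requires care at the nodes of $C$.
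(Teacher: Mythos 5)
Your forward direction and your reduction of the converse to Lemma \ref{lem:GIT} follow the paper's route (weighted filtration $\leftrightarrow$ one-parameter subgroup of $H$, the identity $\mu(\E_\bullet,m_\bullet;\varphi)=\mu_\varrho(\nu,w)$, base-change invariance of semistability over the perfect fields involved), and your treatment of the case $w_i=0$ is fine: for the saturated subsheaf $\F$ that is generically full on $C_i$ and generically zero on the other components, only the tuple $(2,\ldots,2)$ survives and $\mu=-a\,m_1\,{\rm trk}(\F)<0$, which is essentially the paper's own argument.

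The gap is in the case where $w_i\neq 0$ but $w_i$ is not $\widetilde\varrho_i$-semistable. Extending the destabilizing flag on ${\mathbb E}(i)$ by saturated subsheaves of \emph{full} rank on every $C_k$, $k\neq i$, does not produce a destabilizing weighted filtration. Indeed, with that extension every step $\E_m$ restricts generically to all of $\E$ on each $C_k$, $k\neq i$; since in this case all $w_k'\neq 0$, the restriction of $\varphi$ to $(\E_{1}^{\otimes a})^{\oplus b}$ is already non-zero (detected at $\eta_k$), so the minimum in the definition of $\mu$ is attained at the tuple $(1,\ldots,1)$ and $\mu(\E_\bullet,m_\bullet;\varphi)=-a\cdot\Gamma(1)>0$ for every choice of $m_\bullet$, because the weight vector sums to zero and hence $\Gamma(1)<0$. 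Equivalently, the one-parameter subgroup of $H$ corresponding to your filtration carries the central weight $\Gamma(1)<0$ on the components $C_k$, $k\neq i$, and these components contribute a strictly positive term to $\mu_\varrho(\nu,w)$ which dominates; so your key identity does hold, but with a positive value, and no contradiction is reached. What is needed is precisely the twisting device from the proof of Lemma \ref{lem:GIT}: twist the destabilizing one-parameter subgroup on component $i$ by a small central weight and put compensating central weights of the opposite sign on the other components so that the trace constraint \eqref{eq:Constraint} holds; in sheaf terms, the low-weight steps of the induced filtration must be generically \emph{zero} on $C_k$, $k\neq i$, becoming full only at the top weight. This is exactly what the paper's construction of $(\E_\bullet,m_\bullet)$ from a tuple $(\la_1,\ldots,\la_t)$ subject to \eqref{eq:Constraint} provides, and without it your contrapositive argument does not close.
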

\begin{proof} 
A weighted filtration $(\E_\bullet, m_\bullet)$ of $\E$ yields the filtration
\[
0={\mathbb E}_0\subsetneq {\mathbb E}_1\subsetneq \cdots\subsetneq {\mathbb E}_s\subsetneq {\mathbb E}_{s+1}={\mathbb E}
\]
of ${\mathbb E}$ and the weights
\[
\Gamma(1)<\cdots<\Gamma(s+1).
\]
For each index $j\in\{\, 1,...,t\,\}$, we get the filtration 
\[
0={\mathbb E}_0(j)\subseteq {\mathbb E}_1(j)\subseteq \cdots\subseteq {\mathbb E}_s(j)\subseteq {\mathbb E}_{s+1}(j)={\mathbb E}(j)
\]
of ${\mathbb E}(j)$. We remove improper inclusions and get 
\[
{\mathbb E}^j_\bullet:\q 0={\mathbb E}^j_0\subsetneq {\mathbb E}^j_1\subsetneq\cdots\subsetneq {\mathbb E}^j_{s_j}\subsetneq {\mathbb E}^j_{s_j+1}={\mathbb E}(j).
\]
Note that, for some indices $j\in \{\, 1,...,t\,\}$, we may have 
\[
{\mathbb E}^j_\bullet:\q 0\subsetneq {\mathbb E}(j).
\]
We define 
\[
\Gamma^j_k:=\min\bigl\{\,\Gamma(i)\,|\,i\in\{\, 1,...,s+1\,\}\wedge {\mathbb E}_i(j)={\mathbb E}^j_k\,\bigr\},\q k=1,...,s_j+1,\ j=1,...,t,
\]
and 
\[
\Gamma^j_\bullet:=(\Gamma^j_1,...,\Gamma^j_{s_j+1}).
\]
By the procedure described, e.g., in \cite{SchBook}, Example 1.5.1.36 - adapted to one parameter subgroups of $\GL_r$ - the weighted flag $({\mathbb E}^j_\bullet,\Gamma^j_\bullet)$ comes from some one parameter subgroup $\la_j\colon {\mathbb G}_m({\mathbb K}_j)\lra \GL({\mathbb E}(j))$, $j=1,...,t$. Although $\la_j$ is not uniquely defined, the value $\mu(\la_j,w'_j)$ is, $j=1,...,t$ (\cite{SchBook}, Proposition 1.5.1.35). It is now straightforward to check that 
\begin{equation}
\label{eq:CompMu}
\mu(\E_\bullet, m_\bullet;\varphi)=\max\bigl\{\, \mu(\la_1,w'_1),...,\mu(\la_t,w'_t)\,\bigr\}.
\end{equation}
This shows that $\mu(\E_\bullet, m_\bullet;\varphi)\ge 0$ holds for every weighted filtration $(\E_\bullet, m_\bullet)$ of $\E$, if $\varphi$ is generically semistable.
\par 
To prove the converse, we have to construct a weighted filtration $(\E_\bullet,m_\bullet)$ from the datum of a certain tuple $(\la_1,...,\la_t)$ in which $\la_j$ is a one parameter subgroup of $\GL({\mathbb E}(j))$, $j=1,...,t$. So, first suppose that $j\in\{\, 1,...,t\,\}$ and $\la_j\colon {\mathbb G}_m({\mathbb K}_j)\lra \GL({\mathbb E}(j))$ is a one parameter subgroup. It yields a filtration
\[
{\mathbb E}^j_\bullet:\q 0={\mathbb E}^j_0\subsetneq {\mathbb E}^j_1\subsetneq\cdots\subsetneq {\mathbb E}^j_{s_j}\subsetneq {\mathbb E}^j_{s_j+1}={\mathbb E}(j).
\]
and a vector $\Gamma_\bullet^j$ consisting of weights
\[
\Gamma_1^j<\cdots<\Gamma_{s_j+1}^j.
\]
Next, suppose that we are given a tuple $(\la_1,...,\la_t)$ in which $\la_j\colon {\mathbb G}_m({\mathbb K}_j)\lra \GL({\mathbb E}(j))$ is a one parameter subgroup, $j=1,...,t$, such that the weighted filtrations $({\mathbb E}^j_\bullet, \Gamma_\bullet^j)$, $j=1,...,t$, satisfy the constraint
\begin{equation}
\label{eq:Constraint}
\sum_{j=1}^t \ell_j\cdot\Bigl(\sum_{k=1}^{s_j+1} \bigl(\dim_{{\mathbb K}_j}({\mathbb E}_k^j)-\dim_{{\mathbb K}_j}({\mathbb E}_{k-1}^j)\bigr)\cdot \Gamma^j_k\Bigr)=0.
\end{equation}
Now, we let 
\[
\Gamma(1)<\cdots <\Gamma(s+1)
\]
be the distinct weights occuring among the $\Gamma^j_k$, $k=1,...,s_j+1$, $j=1,...,t$. For $i\in \{\, 1,...,s+1\,\}$ and $j\in \{\,1,...,t\,\}$, we set 
\[
{\mathbb E}_i(j):={\mathbb E}_k^j
\]
where 
\[
k:=\max\bigl\{\,\kappa\,|\, \kappa\in\{\,1,...,s_j+1\,\}\wedge \Gamma_k^j\le \Gamma(i)\bigr\}.
\]
Set ${\mathbb E}_i:=({\mathbb E}_i(1),...,{\mathbb E}_i(t))$, $i=1,...,s+1$. So far, we have constructed a filtration 
\[
0={\mathbb E}_0\subsetneq {\mathbb E}_1\subsetneq \cdots\subsetneq {\mathbb E}_s\subsetneq {\mathbb E}_{s+1}={\mathbb E}
\]
of ${\mathbb E}$. Let $\iota\colon \Xi\lra C$ be the inclusion. Now, the natural homomorphism
\[
\E\lra\iota_\star({\mathbb E})
\]
is injective, because $\E$ is torsion free. We define 
\[
\widetilde{\E}_i:=\E\cap \iota_\star({\mathbb E}_i)
\]
and 
\[
\E_i:={\rm Ker}\bigl(\E\lra (\E/\widetilde{\E}_i)/\ul{\rm Tors}(\E/\widetilde{\E}_i)\bigr),\q i=1,...,t.
\]
So,
\[
\E_\bullet:\q 0=\E_0\subsetneq \E_1\subsetneq \cdots\subsetneq \E_s\subsetneq \E_{s+1}
\]
is a filtration of $\E$ by saturated subsheaves. We also define $m_\bullet=(m_1,...,m_s)$ by
\[
m_i:=\frac{\Gamma(i+1)-\Gamma(i)}{\alpha},\q i=1,...,s.
\]
By Constraint \eqref{eq:Constraint}, $\Gamma_\bullet=\sum\limits_{i=1}^s m_i\cdot \Gamma^{({\rm trk}(\E_i))}$ satisfies $\Gamma_{{\rm trk}(\E_i)}=\Gamma(i)$, $i=1,...,s+1$. It follows that Formula \eqref{eq:CompMu} holds true. First, we observe that the assumption implies that the restriction of $\varphi$ to $C_j$ is non-zero, $j=1,...,t$. We use the argument from the proof of Lemma \ref{lem:GIT}. If, for example, the restriction of $\varphi$ to $C_1$ were zero, we would pick a negative integer $\delta_1$ which is divisible by $\ell_1$ and set 
\[
\delta_2=-\frac{\delta_1}{\ell_1}\cdot (\ell_2+\cdots+\ell_t).
\]
Define the one parameter subgroups $\la_1\colon {\mathbb G}_m({\mathbb K}_1)\lra \GL({\mathbb E}(1))$, $z\lma z^{\delta_1}\cdot \id_{{\mathbb E}(1)}$, and $\la_j\colon {\mathbb G}_m({\mathbb K}_j)\lra\allowbreak \GL({\mathbb E}(j))$, $z\lma z^{\delta_2}\cdot \id_{{\mathbb E}(j)}$, $j=2,...,t$. For the weighted filtration $(\E_\bullet,m_\bullet)$ that was constructed above, we get 
\[
\mu(\E_\bullet,m_\bullet;\varphi)=-a\cdot \delta_2<0.
\]
This contradicts the assumption. Next, take a one parameter subgroup $\la_1\colon {\mathbb G}_m({\mathbb K}_1)\allowbreak\lra \SL_r({\mathbb E}(1))$ and use $\la=(\la_1,0,...,0)$. Since $\la_1$ maps to the special linear group, Constraint \eqref{eq:Constraint} is satisfied. We get a weighted filtration $(\E_\bullet,m_\bullet)$ with 
\[
\mu(\la_1,w'_1)=\mu(\E_\bullet,m_\bullet;\varphi)\ge 0.
\]
This shows that $w_1'$ is semistable with respect to the action of the special linear group $\SL({\mathbb E}(1))$. Since semistability is preserved under field extensions, we infer that $w_1$ is semistable with respect to the action of the special linear group $\SL({\mathbb E}(1)\mathop{\otimes}\limits_{{\mathbb K}_1} {\mathbb K})$. In a similar fashion, we show that $w_j$ is semistable with respect to the action of the special linear group $\SL({\mathbb E}(j)\mathop{\otimes}\limits_{{\mathbb K}_j} {\mathbb K})$, $j=2,...,t$. Lemma \ref{lem:GIT} implies that $w$ is $\varrho$-semistable.
\end{proof}
\subsection*{Asymptotic semistability on smooth curves}
Now, we assume that $C$ is smooth, i.e., that $C_i$ is smooth, $i=1,...,t$, and $C_i\cap C_j=\varnothing$, $1\le i<j\le t$. A torsion free sheaf on $C$ is a tuple $\E=(E(1),...,E(t))$ in which $E(i)$ is a locally free $\Oh_{C_i}$-module, $i=1,...,t$. Slightly abusing notation, we will view $E(i)$ as a torsion free sheaf on $C$, so that 
\[
\E=E(1)\oplus\cdots\oplus E(t).
\]
The \it slope \rm of $\E$ is 
\[
\ol\chi(\E):=\frac{\chi(\E)}{{\rm trk}(\E)}.
\]
So, we call $\E$ \it semistable\rm, if 
\[
\ol\chi(\F)\le \ol\chi(\E)
\]
holds for every saturated subsheaf $0\subsetneq \F\subsetneq \E$. 
\begin{Rem}
Let $i\in\{\, 1,...,t\,\}$. For a locally free $\Oh_{C_i}$-module $E$, one usually defines the slope as 
\[
\mu(E)=\frac{\deg(E)}{{\rm rk}(E)}.
\]
If we view $E$ as a torsion free sheaf on $C$, we have 
\[
\ol\chi(E)=\frac{1}{\ell_i}\cdot\bigl(\mu(E)+1-g(C_i)\bigr).
\]
\end{Rem}
For $i=1,...,t$, $E(i)$ is both a sub and a quotient object of $\E$. The following is an immediate consequence.
\begin{Lem}
\label{lem:HNFPrep}
A torsion free sheaf $\E=(E(1),...,E(t))$ on $C$ is semistable if and only if, for every index $i\in \{\, 1,...,t\,\}$, $E(i)$ is either zero or a non-zero semistable locally free $\Oh_{C_i}$-module with
\[
\ol\chi\bigl(E(i)\bigr)=\ol\chi(E).
\]
\end{Lem}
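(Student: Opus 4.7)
The plan is to exploit the fact that, because the components $C_i$ are disjoint, the category of coherent $\Oh_C$-modules splits as a product of the categories of coherent $\Oh_{C_i}$-modules. Consequently, a saturated subsheaf $\F\subset \E$ decomposes uniquely as $\F=(F(1),\dots,F(t))$ with $F(i)\subset E(i)$ a saturated subsheaf, and the invariants are additive:
\[
\chi(\F)=\sum_{i=1}^t \chi\bigl(F(i)\bigr),\qquad {\rm trk}(\F)=\sum_{i=1}^t \ell_i\cdot {\rm rk}\bigl(F(i)\bigr).
\]

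For the direction ``$\E$ semistable $\Rightarrow$ each non-zero $E(i)$ is semistable with $\ol\chi(E(i))=\ol\chi(\E)$'', I would first use that $E(i)$ is a saturated subsheaf of $\E$ to obtain $\ol\chi(E(i))\le \ol\chi(\E)$. Since $E(i)$ is simultaneously a quotient of $\E$, with kernel the complementary saturated subsheaf $\bigoplus_{j\ne i}E(j)$, the semistability of $\E$ applied to this kernel yields the reverse inequality $\ol\chi(E(i))\ge \ol\chi(\E)$; hence equality. Then any saturated subsheaf $F\subset E(i)$ on $C_i$, extended by zero on the other components, is a saturated subsheaf of $\E$, whence $\ol\chi(F)\le \ol\chi(\E)=\ol\chi(E(i))$, which translates via the slope/Euler characteristic conversion of the preceding remark into the usual slope inequality for $E(i)$.

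For the converse, assume that every non-zero $E(i)$ is semistable with $\ol\chi(E(i))=\ol\chi(\E)$. Let $\F=(F(1),\dots,F(t))\subset \E$ be a saturated subsheaf. For each $i$ with $F(i)\ne 0$, semistability of $E(i)$ gives
\[
\chi\bigl(F(i)\bigr)\le \ol\chi\bigl(E(i)\bigr)\cdot {\rm trk}\bigl(F(i)\bigr)=\ol\chi(\E)\cdot {\rm trk}\bigl(F(i)\bigr),
\]
and the same inequality is trivially an equality for indices with $F(i)=0$. Summing over $i$ and using the additivity above yields $\chi(\F)\le \ol\chi(\E)\cdot {\rm trk}(\F)$, i.e.\ $\ol\chi(\F)\le \ol\chi(\E)$, so $\E$ is semistable.

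The only mildly delicate point is bookkeeping around zero summands and the fact that our notion of slope uses $\ol\chi$ (defined via ${\rm rk}_{\ul\ell}$) rather than the classical $\mu$; this is handled by the formula $\ol\chi(E(i))=(\mu(E(i))+1-g(C_i))/\ell_i$ from the remark, which turns slope inequalities for $\ol\chi$ on $C$ into the classical slope inequalities for $E(i)$ on $C_i$. No deeper input is needed; the statement is essentially a formal consequence of the direct-sum decomposition forced by $C_i\cap C_j=\varnothing$.
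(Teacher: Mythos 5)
Your proposal is correct and takes essentially the same route as the paper, which offers no written argument beyond the remark that each $E(i)$ is both a sub- and a quotient object of $\E$ and declares the lemma an immediate consequence; your write-up is exactly that observation carried out in detail (subsheaf gives $\ol\chi(E(i))\le\ol\chi(\E)$, the complementary summand gives the reverse inequality), together with the evident componentwise decomposition of saturated subsheaves for both directions. Nothing is missing.
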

A torsion free $\Oh_C$-module admits a Harder--Narasimhan filtration
\[
0=:\E_0\subsetneq\E_1\subsetneq \cdots\subsetneq \E_s\subsetneq \E_{s+1}:=\E.
\]
We set 
\[
\ol\E_i:=\E_i/\E_{i-1},\q i=1,...,s+1.
\]
Then, $\ol\E_i$ is semistable, $i=1,...,s+1$, and 
\[
\ol\chi(\ol\E_1)>\cdots>\ol\chi(\ol\E_{s+1}).
\]
Moreover, we let 
\[
\ol\chi_{\max}(\E):=\ol\chi(\ol\E_1)\q\hbox{and}\q \ol\chi_{\min}(\E):=\ol\chi(\ol\E_{s+1}).
\]
\begin{Rem}
For $i\in\{\, 1,...,t\,\}$ and a locally free $\Oh_{C_i}$-module $E$, one has likewise a Harder--Narasimhan filtration
\[
0=:E_0\subsetneq E_1\subsetneq \cdots\subsetneq E_u\subsetneq E_{u+1}:=E
\]
in which $\ol E_i:=E_i/E_{i-1}$ is semistable, $i=1,...,u+1$, and 
\[
\mu(\ol E_1)>\cdots>\mu(\ol E_{u+1}).
\]
In this situation,
\[
\mu_{\max}(E):=\mu(\ol E_1)\q\hbox{and}\q \mu_{\min}(E):=\mu(\ol E_{u+1}).
\]
\end{Rem}
Now, Lemma \ref{lem:HNFPrep} generalizes to the following statement.
\begin{Lem}
\label{lem:HNFPrep2}
For a torsion free sheaf $\E=(E(1),...,E(t))$ on $C$, the identities
\[
\ol\chi_{\max}(\E)=\max\Bigl\{\,\frac{1}{\ell_i}\cdot \bigl(\mu_{\max}\bigl(E(i)\bigr)+1-g(C_i)\bigr) \,\Big|\, i=1,...,t\,\Bigr\}
\]
and
\[
\ol\chi_{\min}(\E)=\min\Bigl\{\,\frac{1}{\ell_i}\cdot \bigl(\mu_{\min}\bigl(E(i)\bigr)+1-g(C_i)\bigr) \,\Big|\, i=1,...,t\,\Bigr\}
\]
hold true.
\end{Lem}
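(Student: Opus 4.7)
The plan is to exploit the fact that, since $C$ is the disjoint union of its components, every saturated subsheaf of $\E=E(1)\oplus\cdots\oplus E(t)$ splits: a saturated subsheaf $\F\subset \E$ is of the form $\F=F(1)\oplus\cdots\oplus F(t)$ with $F(i)=\F\cap E(i)$ a saturated $\Oh_{C_i}$-subsheaf of $E(i)$, $i=1,...,t$ (possibly zero). Conversely, any choice of such $F(i)$ gives a saturated subsheaf of $\E$. The Riemann--Roch translation
\[
\ol\chi\bigl(F(i)\bigr)=\frac{1}{\ell_i}\cdot\bigl(\mu\bigl(F(i)\bigr)+1-g(C_i)\bigr),
\]
recorded just before the statement, is what relates the two notions of slope. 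Write $M:=\max\bigl\{\frac{1}{\ell_i}(\mu_{\max}(E(i))+1-g(C_i))\mid i=1,...,t\bigr\}$.

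For the upper bound $\ol\chi_{\max}(\E)\le M$, I will take an arbitrary non-zero saturated subsheaf $\F\subset\E$ and use the elementary mediant inequality: if $a_i>0$ and $b_i\in\R$, then
\[
\frac{\sum b_i}{\sum a_i}\le \max_{i}\frac{b_i}{a_i}
\]
(the sums being taken over those $i$ with $a_i>0$). Applied to $b_i=\chi(F(i))$ and $a_i=\ell_i\cdot \rk(F(i))$ for those $i$ with $F(i)\neq 0$, this gives $\ol\chi(\F)\le \max_{i:F(i)\neq 0}\ol\chi(F(i))$, and by definition $\ol\chi(F(i))\le \frac{1}{\ell_i}(\mu_{\max}(E(i))+1-g(C_i))\le M$. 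Taking $\F$ to be the maximal destabilizing subsheaf of $\E$ (the first step of the HN filtration) yields $\ol\chi_{\max}(\E)\le M$.

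For the reverse inequality, pick an index $i_0$ realizing $M$ and let $G\subset E(i_0)$ be the maximal destabilizing subsheaf of $E(i_0)$ (so $\mu(G)=\mu_{\max}(E(i_0))$). Viewing $G$ as a saturated subsheaf of $\E$ supported on $C_{i_0}$ (which is legitimate since $C_{i_0}$ is a connected component), one has $\ol\chi(G)=\frac{1}{\ell_{i_0}}(\mu_{\max}(E(i_0))+1-g(C_{i_0}))=M$, so $\ol\chi_{\max}(\E)\ge M$. This completes the proof of the first identity.

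The second identity will be obtained by a dual argument, working with saturated quotients $\E\twoheadrightarrow \Q$ instead of subsheaves (so that $\ol\chi_{\min}(\E)=\inf\ol\chi(\Q)$): any such $\Q$ decomposes as $Q(1)\oplus\cdots\oplus Q(t)$ with $Q(i)$ a locally free quotient of $E(i)$, and the same mediant inequality (now giving a lower bound $\min$) together with the choice of $Q(i_0)$ equal to the minimal HN quotient of the component realizing the minimum gives both inequalities. The main point to watch throughout is the bookkeeping when some $F(i)$ or $Q(i)$ vanishes, which is handled by restricting the maximum/minimum to indices with non-zero contribution; this is the only place one has to be careful, but it causes no real trouble since the maximum in the statement is taken over all $i$ and $\mu_{\max}$, $\mu_{\min}$ on a non-zero bundle are always attained.
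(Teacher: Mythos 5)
Your proof is correct. Note that the paper itself offers no written proof of this lemma: it is presented as an immediate generalization of Lemma \ref{lem:HNFPrep}, the intended argument presumably being to assemble the Harder--Narasimhan filtration of $\E$ from the componentwise filtrations of the $E(i)$ (merging graded pieces of equal normalized slope $\frac{1}{\ell_i}(\mu+1-g(C_i))$) and then read off the extreme slopes from the first and last graded pieces. You instead bypass the construction of the HN filtration of $\E$ altogether, using the standard extremal characterizations $\ol\chi_{\max}(\E)=\sup_{\F}\ol\chi(\F)$ over saturated subsheaves and $\ol\chi_{\min}(\E)=\inf_{\Q}\ol\chi(\Q)$ over torsion-free quotients, combined with the splitting of subobjects over the disjoint components and the mediant inequality. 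Both routes are sound; yours is more self-contained (it does not invoke Lemma \ref{lem:HNFPrep}), at the cost of silently using the sup/inf characterization of $\ol\chi_{\max}$ and $\ol\chi_{\min}$, which is standard HN theory for any slope of the form (additive)/(positive additive) and is harmless to assume here. Your attention to the indices with $F(i)=0$ or $Q(i)=0$ is exactly the right bookkeeping point.
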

Next, let $(\E,\ol q)$ be a torsion free $\Oh_C$-module endowed with a generalized parabolic structure and fix $\ul \kappa=(\kappa_1,...,\kappa_c)$ as before. The \it $\ul\kappa$-slope \rm of $(\E,\ul q)$ is 
\[
\ol\chi_{\ul\kappa}(\E,\ul q):=\frac{\chi_{\ul\kappa}(\E)}{{\rm trk}(\E)}.
\]
So, we call $(\E,\ul q)$ \it $\ul\kappa$-semistable\rm, if 
\[
\ol\chi_{\ul\kappa}(\F,\ul q_\F)\le \ol\chi_{\ul\kappa}(\E,\ul q)
\]
holds for every saturated subsheaf $0\subsetneq \F\subsetneq \E$. For every pair $(\E,\ul q)$, there is also a Harder--Narasimhan filtration 
\[
0=:(\E_0,\ul q_0)\subsetneq(\E_1,\ul q_1)\subsetneq \cdots\subsetneq (\E_s,\ul q_s)\subsetneq (\E_{s+1},\ul q_{s+1}):=(\E,\ul q)
\]
in which $(\ol \E_i,\ul r_i):=(\E_i,\ul q_i)/(\E_{i-1}, \ul q_{i-1})$ is $\ul\kappa$-semistable, $i=1,...,s+1$, and 
\[
\ol\chi_{\ul\kappa}(\ol\E_1,\ul r_1)>\cdots>\ol\chi_{\ul\kappa}(\ol\E_{s+1},\ul r_{s+1}).
\]
As before, we define
\[
\ol\chi_{\ul\kappa,\max}(\E,\ul q):=\ol\chi_{\ul\kappa}(\ol\E_1,\ul r_1)\q\hbox{and}\q \ol\chi_{\ul\kappa,\min}(\E,\ul q):=\ol\chi_{\ul\kappa}(\ol\E_{s+1}, \ul r_{s+1}).
\]
\begin{Lem}
\label{lem:CompareMaxSlopes}
Let $\E$ be a torsion free $\Oh_C$-module of uniform rank $r$, $\ul q$ a generalized parabolic structure on $\E$, $\ul t(\ul q)=(t_1,...,t_c)$ the type of $\ul q$, and $\ul\kappa$ a tuple of positive real numbers. Define 
\[
D:=r\cdot \Bigl(\sum_{i=1}^t \ell_i\Bigr)\cdot \Bigl(\sum_{j=1}^c \kappa_j\cdot t_j\Bigr).
\]
Then,
\[
\ol\chi_{\max}(\E)-D\le \ol\chi_{\ul\kappa,\max}(\E,\ul q)\le \ol\chi_{\max}(\E)
\]
and 
\[
\ol\chi_{\min}(\E)\le \ol\chi_{\ul\kappa,\min}(\E,\ul q)\le \ol\chi_{\min}(\E)+D.
\]
\end{Lem}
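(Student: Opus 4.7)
My plan is to establish a uniform pointwise comparison between $\chi_{\ul\kappa}$ and $\chi$ for every saturated subsheaf and every torsion-free quotient of $\E$, and then transfer it to the extreme slopes in the two Harder--Narasimhan filtrations.

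The key pointwise bound is as follows. For a saturated subsheaf $\F\subseteq\E$, set $S_j^\F:=q_j(\F_{|\{\mu_j\}}\oplus \F_{|\{\nu_j\}})$; then $0\le \dim_\C(S_j^\F)\le t_j$, so directly from the definition of $\chi_{\ul\kappa}$ one has $\chi(\F)-\sum_{j=1}^c \kappa_j t_j \le \chi_{\ul\kappa}(\F,\ul q_\F)\le \chi(\F)$. The natural parabolic structure on a torsion-free quotient $\E/\F$ has type $(t_j-\dim_\C(S_j^\F))_j$, again in $[0,t_j]$ componentwise, so the identical estimate holds for quotients. Since ${\rm trk}(\F)\ge 1$ for every nonzero sheaf, dividing the inequality by ${\rm trk}(\F)$, and noticing that $\sum_j \kappa_j t_j \le D$ (as $r\ge 1$ and $\sum_i\ell_i\ge 1$), one obtains the clean slope comparison $\ol\chi(\F)-D \le \ol\chi_{\ul\kappa}(\F,\ul q_\F)\le \ol\chi(\F)$, and likewise for torsion-free quotients with their induced parabolic structure.

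From this, the max inequalities are almost immediate. Testing the upper slope bound on the first step $\E_1^\kappa$ of the $\ul\kappa$-HN filtration of $(\E,\ul q)$ gives $\ol\chi_{\ul\kappa,\max}(\E,\ul q) = \ol\chi_{\ul\kappa}(\E_1^\kappa,\ul q_{\E_1^\kappa}) \le \ol\chi(\E_1^\kappa)\le \ol\chi_{\max}(\E)$. Testing the lower bound on the first step $\E_1$ of the standard HN filtration of $\E$, whose $\ol\chi$-slope realises $\ol\chi_{\max}(\E)$, gives $\ol\chi_{\ul\kappa,\max}(\E,\ul q) \ge \ol\chi_{\ul\kappa}(\E_1,\ul q_{\E_1})\ge \ol\chi_{\max}(\E)-D$. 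The two min inequalities follow by the strictly dual argument: apply the slope comparison to the last quotient in the $\ul\kappa$-HN filtration and to the last quotient in the standard HN filtration (whose $\ol\chi$-slope realises $\ol\chi_{\min}(\E)$), and play them off against each other.

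I do not anticipate any serious obstacle: the whole lemma reduces to the elementary estimate $\sum_j\kappa_j\dim_\C(S_j^\F)\le \sum_j\kappa_j t_j$, after which everything is bookkeeping. The only mildly subtle point is that the constant $D$ is deliberately loose, with the factor $r\cdot\sum_i \ell_i$ absorbing the harmless $1/{\rm trk}(\F)$ that appears when one passes from Euler characteristics to the normalised slopes $\ol\chi$.
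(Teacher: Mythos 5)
Your treatment of the two max inequalities is exactly the paper's argument: establish the uniform comparison $\ol\chi(\F)-D\le\ol\chi_{\ul\kappa}(\F,\ul q_\F)\le\ol\chi(\F)$ for saturated subsheaves (the paper even uses the same crude absorption of $1/{\rm trk}(\F)$ into the factor $r\cdot\sum_i\ell_i$ of $D$), then test it once against the maximal destabilizing subsheaf of $\E$ and once against that of $(\E,\ul q)$. The paper writes out only this half and leaves the min half to the reader.

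For the min half there is a discrepancy you should have caught. Your ``strictly dual argument'' --- comparing the last quotient of the $\ul\kappa$-Harder--Narasimhan filtration with the last quotient of the ordinary one --- yields
\[
\ol\chi_{\min}(\E)-D\ \le\ \ol\chi_{\ul\kappa,\min}(\E,\ul q)\ \le\ \ol\chi_{\min}(\E),
\]
not the displayed $\ol\chi_{\min}(\E)\le \ol\chi_{\ul\kappa,\min}(\E,\ul q)\le \ol\chi_{\min}(\E)+D$. Since $\chi_{\ul\kappa}$ is obtained from $\chi$ by subtracting a non-negative correction, the parabolic slope of every quotient (with its induced structure) is at most its ordinary slope, so one always has $\ol\chi_{\ul\kappa,\min}\le\ol\chi_{\min}$; the stated lower bound $\ol\chi_{\min}\le\ol\chi_{\ul\kappa,\min}$ already fails for $\E=\Oh_C$ on a smooth irreducible curve with a single nontrivial parabolic datum, where $\ol\chi_{\ul\kappa,\min}=\ol\chi_{\ul\kappa}(\E,\ul q)=\ol\chi(\E)-\kappa_1 t_1/{\rm trk}(\E)<\ol\chi_{\min}(\E)$. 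In other words, the displayed min inequalities have the shift on the wrong side, and your method, carried out honestly, proves the corrected version rather than the one you claim to obtain. This is harmless downstream --- the only consequence invoked in the proof of Theorem \ref{thm:MainAux} is $\ol\chi_{\ul\kappa,\min}\le\ol\chi_{\min}+D$, which holds under either version --- but a complete write-up must either prove what is stated or note that the statement needs amending; asserting that the dual argument delivers the stated inequalities is not correct. All the supporting details you give (the bound $0\le\dim_\C(S_j^\F)\le t_j$, the type of the induced quotient structure, ${\rm trk}(\F)\ge 1$, and $\sum_j\kappa_j t_j\le D$) are fine.
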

\begin{proof}
We verify the statement for the maximal slope. Let $\F\subset \E$ be a saturated subsheaf. It inherits a generalized parabolic structure $\ul q_\F$. Now,
\[
\ol\chi(\F)-D\le \ol\chi_{\ul\kappa}(\F,\ul q_\F)\le \ol\chi_{\ul\kappa,\max}(\E,\ul q).
\]
This proves the first inequality. On the other hand, 
\begin{eqnarray*}
\ol\chi_{\ul\kappa}(\F,\ul q_\F)&=&\ol\chi(\F)-\frac{1}{{\rm trk}(\F)}\cdot \sum_{i=1}^c\dim_\C(S_i)
\\
&\le& \ol\chi_{\max}(\E)-\frac{1}{{\rm trk}(\F)}\cdot \sum_{i=1}^c\dim_\C(S_i)
\\
&\le&  \ol\chi_{\max}(\E).
\end{eqnarray*}
This gives the second inequality.
\end{proof}
The main auxiliary result of this note is the following result.
\begin{Thm}
\label{thm:MainAux}
Fix $r>0$, $\chi\in\Z$, and a tuple $\ul\kappa=(\kappa_1,...,\kappa_c)$. Then, there exists a constant $K$, such that, for every $\delta>0$, every $(\ul\kappa,\delta)$-semistable $(a,b)$-GPS-swamp $(\E=(E(1),...,E(t)),\ul q,\varphi)$ in which $\E$ has uniform rank $r$ and $\chi(\E)=\chi$, every index $i\in\{\, 1,...,t\,\}$, and every subbundle $0\subsetneq F\subsetneq E(i)$, the inequality
\[
\mu(F)\le K
\]
is satisfied.
\end{Thm}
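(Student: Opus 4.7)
The plan is to apply $(\ul\kappa,\delta)$-semistability to the one-step weighted filtration $(0 \subsetneq \tilde F \subsetneq \E,\, m_\bullet=(1))$, where $\tilde F \subset \E$ is the extension of $F \subset E(i)$ by zero on the other components $C_j$, $j\neq i$. By Lemma \ref{lem:HNFPrep2} combined with Lemma \ref{lem:CompareMaxSlopes}, it is enough to bound $\ol\chi_{\ul\kappa,\max}(\E,\ul q)$ uniformly in $\delta$, so I may replace $F$ by the maximal destabilizing subsheaf of $E(i)$ and assume $F$ is semistable with $\mu(F) = \mu_{\max}(E(i))$. The case $\mu(F)\le 0$ is trivial, so assume $\mu(F)>0$.

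Setting $r_1 := {\rm trk}(\tilde F) = \ell_i\cdot {\rm rk}(F)$ and $\alpha := {\rm trk}(\E)$, the relevant weights are $\Gamma(1) = r_1 - \alpha$ and $\Gamma(2) = r_1$, so
\[
\mu(\E_\bullet, m_\bullet;\varphi) \;=\; k_{\max}\cdot \alpha - a\cdot r_1,
\]
where $k_{\max}\in \{0,\ldots,a\}$ is the largest number of factors equal to $\tilde F$ that appear in a non-vanishing $\varphi$-restriction. Using the lower bound $\chi_{\ul\kappa}(\tilde F,\ul q_{\tilde F}) \ge \chi(\tilde F) - 2(\sum_j \kappa_j)\cdot {\rm rk}(F)$ on the parabolic contribution and rearranging the semistability inequality produces
\[
\mu(F) \;\le\; C_1 + \frac{\delta\cdot (k_{\max}\cdot \alpha - a\cdot r_1)}{{\rm rk}(F)\cdot \alpha}
\]
for a constant $C_1$ depending only on $r$, $\chi$, $\ul\kappa$ and the base data.

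The key point is to show that once $\mu(F)$ exceeds an explicit threshold, the factor $k_{\max}\cdot\alpha - a\cdot r_1$ is non-positive, so the $\delta$-dependent term drops out. Because the components of $C$ are disjoint in this section, $\E^{\otimes a} = \bigoplus_j E(j)^{\otimes a}$, and a non-vanishing restriction of $\varphi$ with $k$ factors equal to $\tilde F$ yields a non-zero map $F^{\otimes k}\otimes E(i)^{\otimes (a-k)}\lra \Oh_{C_i}$. Semistability of $F$ and comparison with the minimal slope of the domain force $k\cdot\mu(F) + (a-k)\cdot \mu_{\min}(E(i)) \le 0$; assuming an a priori lower bound $\mu_{\min}(E(i)) \ge -M$, this yields $k_{\max} \le aM/(\mu(F)+M)$, which falls strictly below $a\cdot r_1/\alpha$ as soon as $\mu(F)$ exceeds an explicit threshold in $M$, $a$ and $\alpha$. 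In particular, when the restriction of $\varphi$ to $C_i$ vanishes we automatically have $k_{\max}=0$, and the bound is $\delta$-independent without any further input.

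The principal obstacle is securing the lower bound $\mu_{\min}(E(i)) \ge -M$ uniformly in $\delta$: the naive estimate $\mu_{\min}(E(i)) \ge r\cdot \mu(E(i)) - (r-1)\mu_{\max}(E(i))$ only transfers the difficulty from one end of the Harder--Narasimhan spectrum to the other. I would close this loop by induction on the uniform rank $r$, with the case $r=1$ immediate: for $r>1$, one applies the preceding argument to the successive Harder--Narasimhan factors of $E(i)$, each of strictly smaller rank, after restricting the swamp structure appropriately. Alternatively, following \cite{GLSS}, one introduces a two-step refinement of the test filtration coupling $\tilde F$ with a second saturated subsheaf chosen to control the minimal-slope behaviour, and solves the resulting coupled inequalities. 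Carrying out this bootstrap cleanly across the $t$ components — while accounting for the coupling introduced by the generalized parabolic structure, whose supports may cross components — is the main technical difficulty.
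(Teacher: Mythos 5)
Your reduction to bounding $\ol\chi_{\ul\kappa,\max}(\E,\ul q)$ and your computation of $\mu(\E_\bullet,m_\bullet;\varphi)=k_{\max}\cdot\alpha-a\cdot r_1$ for the one-step test filtration are fine, but the proof has a genuine gap exactly where you flag it: the entire point of the theorem is the uniformity in $\delta$, and your mechanism for killing the $\delta$-term (showing $k_{\max}\cdot\alpha\le a\cdot r_1$ once $\mu(F)$ is large) requires an a priori lower bound $\mu_{\min}(E(i))\ge -M$ that is logically of the same nature as the bound you are trying to prove. Worse, on a reducible curve it is not even recoverable from an upper bound on $\mu_{\max}$ of the same component, because only the total Euler characteristic $\chi(\E)$ is fixed, while the individual degrees $\deg(E(i))$ can drift between components; so the bootstrap is coupled across all $t$ components and the parabolic data, and neither of your two suggested fixes closes it. The induction on rank does not make sense as stated: the tensor field $\varphi$ does not induce a swamp on the Harder--Narasimhan factors of $E(i)$ in a way that inherits $(\ul\kappa,\delta)$-semistability, and uniform rank is lost. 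The ``two-step refinement'' is a gesture toward the real difficulty rather than an argument.

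The paper closes this gap by a different mechanism, which is the idea missing from your proposal. One tests with the full Harder--Narasimhan filtration of $(\E,\ul q)$: for it, $\chi_{\ul\kappa}(\E_\bullet,m_\bullet;\ul q)<0$ for \emph{every} choice of positive weights $m_\bullet$, so $(\ul\kappa,\delta)$-semistability forces $\mu(\E_\bullet,m_\bullet;\varphi)>0$ for every such choice, independently of the size of $\delta$. By the finiteness observed in Remark \ref{rem:FinMan} together with Kempf's lemma (the minimum of each piecewise linear function $\Phi(\ul r,P)$ on the unit sphere of the cone is attained on a ray spanned by an integral vector, hence is positive), this positivity upgrades to a uniform lower bound $\mu(\E_\bullet,m_\bullet;\varphi)\ge K_0\cdot\|\Gamma_\bullet\|$ when the weights are chosen so that $\Gamma_\bullet$ records the parabolic HN slopes. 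On the other hand, your tensor-product slope estimate (applied to a non-vanishing restriction of $\varphi$, via Lemmas \ref{lem:HNFPrep2} and \ref{lem:CompareMaxSlopes}) gives an upper bound $\mu(\E_\bullet,m_\bullet;\varphi)\le K_1$ with $K_1$ independent of $\delta$, with no a priori bound on $\mu_{\min}$ needed because one only uses that the relevant minimal slopes sum to at most $0$. Comparing the two bounds controls $\|\Gamma_\bullet\|$, hence $\ol\chi_{\ul\kappa,\max}(\E,\ul q)$, hence every $\mu_{\max}(E(i))$ at once, which resolves precisely the cross-component coupling your one-subsheaf-at-a-time approach cannot handle.
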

\begin{proof}
This corresponds to Theorem 3.3.20 in \cite{MunThesis}. We present a simpler proof which is a modification of the proof of Theorem 4.2.1 in \cite{GLSS}. Let 
\[
0=:(\E_0,\ul q_0)\subsetneq(\E_1,\ul q_1)\subsetneq \cdots\subsetneq (\E_s,\ul q_s)\subsetneq (\E_{s+1},\ul q_{s+1}):=(\E,\ul q)
\]
be the Harder--Narasimhan filtration of $(\E,\ul q)$. We take the filtration $\E_\bullet:\ \E_0\subsetneq \E_1\subsetneq\cdots\subsetneq \E_s\subsetneq \E_{s+1}$. Then, for any choice of $m_\bullet=(m_1,...,m_s)\in (\Q_{>0})^{\times s}$, we have 
\[
\chi_{\ul\kappa}(\E_\bullet, m_\bullet; \varphi)<0.
\]
The assumption on the semistability of $(\E,\ul q, \varphi)$ implies 
\[
\forall m_\bullet\in (\Q_{>0})^{\times s}:\q \mu(\E_\bullet, m_\bullet; \ul q)>0.
\]
In the notation of Remark \ref{rem:FinMan}, we have $\ul r=({\rm trk}(\E_1),...,{\rm trk}(\E_{s+1}))\in R$ and $\E_\bullet$, $\varphi$ define an element $P\in{\mathcal P}(s)$. Let $\|\cdot\|$ be the euclidean norm on $\R^{\alpha}$ and $\|\cdot\|_\infty$ the maximum norm. The function $\Phi(\ul r,P)$ takes on a minimal value on $\{\, \ul v\in {\mathcal C}(\ul r)\,|\, \|\ul v\|=1\,\}$. According to \cite{Ke}, Lemma 2.3, the minimal value occurs on a ray that is spanned by an integral vector. So, the minimal value must be positive. Since there are only finitely many possibilities for $\Phi(\ul r,P)$, we may find a positive constant $K_0>0$, such that the minimal value is at least $K_0$.
\par 
Suppose $(i_1,...,i_a)\in \{\,1,...,s+1 \,\}^{\times a}$ is an index for which the restriction of $\varphi$ to $(\E_{i_1}\otimes\cdots\otimes \E_{i_a})^{\oplus b}$ is non-zero. Then, we find an index $j_0\in \{\, 1,...,t\,\}$ and a non-zero homomorphism
\[
E_{i_1}(j_0)\otimes\cdots\otimes E_{i_a}(j_0)\lra \Oh_{C_{j_0}}.
\]
This implies
\[
\mu_{\min}\bigl(E_{i_1}(j_0)\bigr)+\cdots+\mu_{\min}\bigl(E_{i_a}(j_0)\bigr)=\mu_{\min}\bigl(E_{i_1}(j_0)\otimes\cdots\otimes E_{i_a}(j_0)\bigr)\le 0.
\]
With $\ol\chi_{\min}(E_k(j_0)):=(1/\ell_{j_0})\cdot (\mu_{\min}(E_k(j_0))+1-g(C_{j_0}))$, $k=i_1,...,i_a$, we infer 
\[
\ol\chi_{\min}\bigl(E_{i_1}(j_0)\bigr)+\cdots+\ol\chi_{\min}\bigl(E_{i_a}(j_0)\bigr) \le a.
\]
Lemma \ref{lem:HNFPrep2} implies 
\[
\ol\chi_{\min}(\E_{i_1})+\cdots+\ol\chi_{\min}(\E_{i_a})\le a.
\]
Using Lemma \ref{lem:CompareMaxSlopes}, we find
\begin{eqnarray}
\nonumber
\ol\chi_{\ul\kappa}(\ol\E_{i_1},\ul r_{i_1})+\cdots+\ol\chi_{\ul\kappa}(\ol\E_{i_a},\ul r_{i_a})
&=& \ol\chi_{\ul\kappa,\min}(\E_{i_1},\ul q_{i_1})+\cdots+\ol\chi_{\ul\kappa,\min}(\E_{i_a},\ul q_{i_a})
\\
\label{eq:UpperBoundMu}
&\le& a\cdot (D+1).
\end{eqnarray}
Set 
\[
K_1:=a \cdot (\chi+D+1).
\]
The vector $\Gamma_\bullet\in {\mathcal C}(\ul r)$ is given as 
\[
\bigl(\underbrace{\ol\chi_{\kappa}(\ol\E_1,\ul r_1)-\chi,...,\ol\chi_{\kappa}(\ol\E_1,\ul r_1)-\chi}_{{\rm trk}(\ol\E_1)\times},...,\underbrace{\ol\chi_{\kappa}(\ol\E_{s+1},\ul r_{s+1})-\chi,...,\ol\chi_{\kappa}(\ol\E_{s+1},\ul r_{s+1})-\chi}_{{\rm trk}(\ol\E_{s+1})\times}\bigr).
\]
Introducing $m_\bullet=(m_1,...,m_s)$ by
\[
m_i:=\frac{\ol\chi_{\kappa}(\ol\E_{i+1},\ul r_{i+1})-\ol\chi_{\kappa}(\ol\E_i,\ul r_i)}{\alpha},\q i=1,...,s,
\]
we have 
\[
\Gamma_\bullet=\sum_{i=1}^s m_i\cdot \Gamma^{{\rm trk}(\E_i)}.
\]
By \eqref{eq:UpperBoundMu},
\[
\mu(\E_\bullet,m_\bullet;\varphi)\le K_1.
\]
On the other hand,
\[
\mu(\E_\bullet,m_\bullet;\varphi)=\Phi(\ul r,P)(\Gamma_\bullet)=
\|\Gamma_\bullet\|\cdot \Phi(\ul r,P)\biggl(\frac{\Gamma_\bullet}{\|\Gamma_\bullet\|}\biggr)\ge \|\Gamma_\bullet\|\cdot K_0.
\]
So,
\[
\|\Gamma_\bullet\|_\infty \le \sqrt{\alpha}\cdot \|\Gamma_\bullet\|\le \sqrt{\alpha}\cdot \frac{K_1}{K_0}.
\]
This implies a bound on $\ol\chi_{\ul\kappa,\max}(\E,\ul q)$. In view of Lemma \ref{lem:HNFPrep2} and \ref{lem:CompareMaxSlopes}, this proves our claim.
\end{proof}
\begin{Cor}
\label{cor:MainAux}
Fix $r>0$, $\chi\in\Z$, and a tuple $\ul\kappa=(\kappa_1,...,\kappa_c)$. Then, there exists a positive rational number $\delta_\infty$, such that, for $\delta\ge\delta_\infty$, an $(a,b)$-GPS-swamp $(\E=(E(1),...,E(t)),\ul q,\varphi)$ in which $\E$ has uniform rank $r$ and $\chi(\E)=\chi$ is $(\ul\kappa,\delta)$-(semi)stable if and only if it is asymptotically $\ul\kappa$-(semi)stable.
\end{Cor}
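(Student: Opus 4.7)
My plan is to derive the corollary from Theorem~\ref{thm:MainAux} by a wall-crossing argument that reduces the $(\ul\kappa,\delta)$-(semi)stability condition to a finite combinatorial problem. Since both $\chi_{\ul\kappa}(\E_\bullet,m_\bullet,\ul q)$ and $\mu(\E_\bullet,m_\bullet;\varphi)$ are homogeneous of degree one in $m_\bullet$, the inequality $\chi_{\ul\kappa}+\delta\mu(\ge)0$ is scale-invariant, and the aim is to extract a uniform $\delta_\infty\in\Q_{>0}$ from a finite list of ``shapes'' of weighted filtrations.

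\emph{Step 1: finite shape data.} By Theorem~\ref{thm:MainAux}, every $(\ul\kappa,\delta)$-semistable GPS-swamp with the given $r,\chi$ (for any $\delta>0$) has $\mu(F)\le K$ for every subbundle $F\subset E(i)$, so Lemmas~\ref{lem:HNFPrep2} and~\ref{lem:CompareMaxSlopes} bound the integer $\chi(\E_i)$ for every saturated subsheaf $\E_i\subset\E$ of a fixed total rank. Combined with $\dim_\C(S_{j,i})\in\{0,\ldots,2\,{\rm trk}(\E_i)\}$, this shows that $\chi_{\ul\kappa}(\E_i,\ul q_{\E_i})$ takes only finitely many values, and hence so do the coefficients $c_i:=\chi_{\ul\kappa}(\E,\ul q)\,{\rm trk}(\E_i)-\chi_{\ul\kappa}(\E_i,\ul q_{\E_i})\,{\rm trk}(\E)$ in $\chi_{\ul\kappa}(\E_\bullet,m_\bullet,\ul q)=\sum_i m_i c_i$. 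The same bound applies to asymptotically $\ul\kappa$-(semi)stable swamps, since these are $(\ul\kappa,\delta_0)$-(semi)stable for some swamp-dependent $\delta_0>0$. Together with Remark~\ref{rem:FinMan}, the shape data $\xi=(\ul r,c_\bullet,P)$ of a weighted filtration thus ranges over a finite set $\mathcal F$.

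\emph{Step 2: the constant $\delta_\infty$.} For each $\xi\in\mathcal F$ both $\chi_{\ul\kappa}$ and $\mu=\Phi(\ul r,P)(\Gamma_\bullet)$ are concrete (linear, respectively piecewise-linear) rational functions of $m_\bullet\in\R^s_{>0}$. I set
\[
D_\xi:=\sup\{-\chi_{\ul\kappa}/\mu:\mu>0,\chi_{\ul\kappa}<0\},\qquad T_\xi:=\inf\{\chi_{\ul\kappa}/|\mu|:\mu<0\},
\]
with conventions $D_\xi=0$, $T_\xi=+\infty$ when the corresponding set is empty. Both are scale-invariant and are therefore computed on a normalized compact slice; a boundary analysis on each linearity piece shows that $D_\xi<\infty$ precisely when $\chi_{\ul\kappa}\ge 0$ throughout $\{\mu=0\}$ (any limit $\mu\to 0^+$ reached from inside $\{\chi_{\ul\kappa}<0\}$ must then have $\chi_{\ul\kappa}\to 0$, producing a finite M\"obius limit for $-\chi_{\ul\kappa}/\mu$), and that $D_\xi$ and $T_\xi$ are rational whenever finite. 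Define
\[
\delta_\infty:=1+\max\{D_\xi:\xi\in\mathcal F,D_\xi<\infty\}+\max\{T_\xi:\xi\in\mathcal F,0<T_\xi<\infty\}\in\Q_{>0}.
\]

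\emph{Step 3: the equivalence.} Fix $\delta\ge\delta_\infty$ and a swamp $(\E,\ul q,\varphi)$. If the swamp is asymptotically $\ul\kappa$-(semi)stable, then every shape appearing in a weighted filtration has $\chi_{\ul\kappa}(\ge)0$ on $\{\mu=0\}$, so $D_\xi$ is finite and $D_\xi<\delta$; the inequality $\chi_{\ul\kappa}+\delta\mu(\ge)0$ then holds case-by-case on $\{\mu=0\}$, $\{\mu>0\}$, and $\{\mu<0\}$ (the last being forbidden by asymptotic (semi)stability). Conversely, $(\ul\kappa,\delta)$-(semi)stability applied at $\mu=0$ yields $\chi_{\ul\kappa}(\ge)0$ directly; and if some weighted filtration of shape $\xi$ satisfied $\mu(m^*)<0$, then necessarily $T_\xi>0$ (else $(\ul\kappa,\delta)$-(semi)stability is already violated at some $m$), so $T_\xi<\delta_\infty\le\delta$, and by density of the rationals and the definition of the infimum one may choose a positive rational $m^{**}\in\R^s_{>0}$ in the same shape with $\chi_{\ul\kappa}(m^{**})/|\mu(m^{**})|<\delta$; the weighted filtration $(\E_\bullet,m^{**})$ then violates $(\ul\kappa,\delta)$-(semi)stability, a contradiction. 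The principal technical hurdle is the finiteness $D_\xi<\infty$ (and the symmetric analysis of $T_\xi$) in Step~2: while $-\chi_{\ul\kappa}/\mu$ blows up generically as $\mu\to 0^+$, asymptotic (semi)stability forces the numerator to vanish at least as fast as the denominator along each piecewise-linear piece, paralleling and extending the use of Kempf's lemma in the proof of Theorem~\ref{thm:MainAux}.
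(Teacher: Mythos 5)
The central problem is the finiteness claim in your Step 1, and it is a genuine gap. Theorem \ref{thm:MainAux}, together with Lemmas \ref{lem:HNFPrep2} and \ref{lem:CompareMaxSlopes}, bounds $\mu(F)$, and hence $\chi(\E_i)$ and $\chi_{\ul\kappa}(\E_i,\ul q_{\E_i})$, only \emph{from above}; there is no lower bound, because a saturated subsheaf of fixed total rank can have arbitrarily negative Euler characteristic (already a locally free sheaf on one smooth component has rank-one saturated subsheaves of arbitrarily negative degree). Consequently the coefficients $c_i:=\chi_{\ul\kappa}(\E,\ul q)\,{\rm trk}(\E_i)-\chi_{\ul\kappa}(\E_i,\ul q_{\E_i})\,{\rm trk}(\E)$ are unbounded above, your set $\mathcal F$ of shapes is infinite, and the constant $\delta_\infty$ of Step 2 need not exist: for instance, a filtration consisting of a ``junk'' step of very negative Euler characteristic together with a step on which $\mu$ becomes negative produces values of your $T_\xi$ that grow without bound as the junk step degenerates. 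To repair this you must show that such steps can be eliminated --- e.g.\ by letting their weights tend to $0$, or by proving that both semistability notions may be tested against filtrations whose members also have slopes bounded \emph{below}, using an estimate of the type $\mu(\E_\bullet,m_\bullet;\varphi)\ge -a\cdot\sum_i m_i\cdot{\rm trk}(\E_i)$ to see that steps with huge $c_i$ are harmless. Because $\mu$ is only piecewise linear and not additive over the steps of the filtration, this reduction is not automatic; it is precisely the technical content of the argument the paper invokes, namely the adaptation of Corollary 4.2.2 of \cite{GLSS} to the present setting, and it cannot be replaced by the asserted finiteness.

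A secondary gap: you transfer the slope bound to asymptotically $\ul\kappa$-(semi)stable swamps by claiming that each of them is $(\ul\kappa,\delta_0)$-semistable for some object-dependent $\delta_0$. This is unproven and essentially circular (it is a single-object version of one direction of the corollary, and its verification runs into the same unboundedness issue as above). The correct and simpler route is to note that the proof of Theorem \ref{thm:MainAux} applies verbatim to asymptotically semistable objects: for the Harder--Narasimhan filtration one has $\chi_{\ul\kappa}(\E_\bullet,m_\bullet;\ul q)<0$ for all weights, and asymptotic semistability then forces $\mu(\E_\bullet,m_\bullet;\varphi)>0$, which is all that proof uses. The remaining points of Steps 2--3 (rationality and strict positivity of the chosen weights, the boundary analysis showing $D_\xi<\infty$ when $\chi_{\ul\kappa}\ge 0$ on $\{\mu=0\}$) are plausible but would also need to be written out carefully once the finiteness issue is resolved.
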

\begin{proof}
The proof of Corollary 4.2.2 in \cite{GLSS} may be easily adapted to the situation of this corollary in order to infer it from the preceding theorem.
\end{proof}
\subsection*{Asymptotic semistability on nodal curves}
In this section, $C$ will be a connected projective curve with at most nodes as singularities. The set $N$ is assumed to be empty. For this reason, $\ul\kappa$ does not occur and will not appear in the notation. The same applies to the acronym ``GPS''.
\begin{Thm}
\label{thm:MainAux2}thm:MainAux
Fix $r>0$, $\chi\in\Z$.
\par 
{\rm i)} There exists a constant $H$, such that, for every $\delta>0$, every $\delta$-semistable $(a,b)$-swamp $(\E,\varphi)$ in which $\E$ has uniform rank $r$ and $\chi(\E)=\chi$,  and every saturated subsheaf $0\subsetneq \F\subsetneq \E$, the inequality
\[
\ol\chi(\F)\le H
\]
is satisfied.
\par 
{\rm ii)} There exists a positive rational number $\delta_\infty$, such that, for $\delta\ge\delta_\infty$, an $(a,b)$-swamp $(\E,\varphi)$ in which $\E$ has uniform rank $r$ and $\chi(\E)=\chi$ is $\delta$-(semi)stable if and only if it is asymptotically (semi)stable.
\end{Thm}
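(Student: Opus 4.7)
The plan is to reduce the nodal case to the smooth case already handled in Theorem~\ref{thm:MainAux} and Corollary~\ref{cor:MainAux} via the Seshadri-type correspondence associated with the normalization. Let $\pi\colon \widetilde{C}\lra C$ be the normalization; since $C$ has at worst nodes, $\widetilde{C}$ is a smooth (possibly disconnected) curve whose irreducible components are the normalizations of those of $C$, so that $\ul\ell=(\ell_1,...,\ell_t)$ carries over unchanged. Let $x_1,...,x_c$ be the nodes of $C$ and set $\pi^{-1}(x_i)=\{\mu_i,\nu_i\}$, so that $N:=\{\mu_1,\nu_1,...,\mu_c,\nu_c\}$ is a set of $2c$ distinct smooth points of $\widetilde{C}$ of exactly the shape assumed at the beginning of Section~\ref{sec:ParSwa}. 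A torsion free $\Oh_C$-module $\E$ of uniform rank $r$ determines, and is determined by, a pair $(\widetilde{\E},\ul q)$, where $\widetilde{\E}=(\pi^\ast \E)/{\rm Tors}$ is locally free of uniform rank $r$ on $\widetilde{C}$ and $\ul q=(q_i\colon \widetilde{\E}_{|\{\mu_i\}}\oplus \widetilde{\E}_{|\{\nu_i\}}\lra R_i,\, i=1,...,c)$ encodes the gluing data at the nodes, with the Euler-characteristic identity $\chi(\widetilde{\E})=\chi(\E)+\sum_{i=1}^c \dim_\C(R_i)$. Analogously, every saturated torsion free subsheaf $\F\subset \E$ corresponds to a saturated subsheaf $\widetilde{\F}\subset \widetilde{\E}$ equipped with the induced parabolic structure $\ul q_{\widetilde{\F}}$, and $\chi(\F)=\chi(\widetilde{\F})-\sum_{i=1}^c \dim_\C(S_i)$, where $S_i:=q_i(\widetilde{\F}_{|\{\mu_i\}}\oplus \widetilde{\F}_{|\{\nu_i\}})$.

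Choosing $\ul\kappa:=(1,...,1)$ converts this geometric correspondence into a numerical one: $\chi_{\ul\kappa}(\widetilde{\F},\ul q_{\widetilde{\F}})=\chi(\F)$, and since ${\rm trk}(\widetilde{\F})={\rm trk}(\F)$, also $\ol\chi_{\ul\kappa}(\widetilde{\F},\ul q_{\widetilde{\F}})=\ol\chi(\F)$. A tensor field $\varphi\colon (\E^{\otimes a})^{\oplus b}\lra \Oh_C$ determines a tensor field $\widetilde{\varphi}$ on $\widetilde{\E}$ by pulling back and killing torsion, and the restrictions of $\varphi$ and $\widetilde{\varphi}$ to corresponding generic points of components of $C$ and $\widetilde{C}$ coincide. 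Consequently, weighted filtrations of $\E$ and of $\widetilde{\E}$ correspond, with matching values of $\mu(\cdot;\varphi)$ and $\mu(\cdot;\widetilde{\varphi})$. Combining with the identity for $\chi$, the $\delta$-(semi)stability of $(\E,\varphi)$ is equivalent to the $(\ul\kappa,\delta)$-(semi)stability of $(\widetilde{\E},\ul q,\widetilde{\varphi})$, and asymptotic (semi)stability on $C$ corresponds to asymptotic $\ul\kappa$-(semi)stability on $\widetilde{C}$.

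With this dictionary in place, part~(i) is immediate from Theorem~\ref{thm:MainAux}: that theorem produces a constant $K$ bounding $\mu(F)$ for every subbundle $0\subsetneq F\subsetneq \widetilde{E}(i)$, which by Lemmas~\ref{lem:HNFPrep2} and~\ref{lem:CompareMaxSlopes} yields a uniform bound on $\ol\chi_{\ul\kappa,\max}(\widetilde{\E},\ul q)$, and this translates back, via $\ol\chi_{\ul\kappa}(\widetilde{\F},\ul q_{\widetilde{\F}})=\ol\chi(\F)$, into the desired uniform bound $H$ on $\ol\chi(\F)$. Part~(ii) follows from Corollary~\ref{cor:MainAux} by the same dictionary; equivalently, one repeats its argument directly on $C$. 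Only the implication ``asymptotically (semi)stable $\Longrightarrow$ $\delta$-(semi)stable'' requires work: for a weighted filtration $(\E_\bullet,m_\bullet)$ with $\mu(\E_\bullet,m_\bullet;\varphi)>0$, the finiteness of piecewise linear shapes of $\mu$ recorded in Remark~\ref{rem:FinMan} gives a uniform lower bound $\mu(\E_\bullet,m_\bullet;\varphi)\ge K_0\cdot \|\Gamma_\bullet\|$, while part~(i) bounds $\chi(\E_\bullet,m_\bullet)$ from below linearly in $\|\Gamma_\bullet\|$; picking $\delta_\infty$ larger than the ratio of these constants yields the required inequality, while the case $\mu=0$ reduces directly to the asymptotic hypothesis. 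I expect the main technical obstacle to be the careful verification of the dictionary of the first two paragraphs, namely that Seshadri's correspondence transports saturated subsheaves, weighted filtrations, and tensor fields with matching values of $\chi$ and $\mu$; once this bookkeeping is settled, the smooth-curve results apply almost verbatim.
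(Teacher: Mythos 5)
Your proposal follows essentially the same route as the paper: pass to the normalization via Bhosle's/Seshadri's correspondence, turn $\E$ into a locally free sheaf with generalized parabolic structure (taking $\ul\kappa=\ul 1$), match $\chi$, $\mu$ and the (asymptotic) semistability notions, and then quote Theorem \ref{thm:MainAux} and Corollary \ref{cor:MainAux}. The only point worth making explicit is that the constants from the smooth-curve results depend on $\chi(\widetilde{\E})$, so one should observe (as the paper does) that only finitely many values of $\chi(\widetilde{\E})$, equivalently of $\sum_i\dim_\C(R_i)$, occur for fixed $r$ and $\chi(\E)=\chi$, and take the maximum of the corresponding constants.
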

\begin{proof}
The proof of these assertions rests on a construction due to Bhosle \cite{Bhosle} which has been exploited in the realm of principal bundles on irreducible nodal curves in \cite{SchNod}, \cite{SchNod2}, and on reducible nodal curves in \cite{MunThesis}, \cite{MunPap2}. Let us recall some of the details. Look at the normalization $\nu\colon \widetilde{C}\lra C$ of $C$ and suppose $\E$ is a torsion free sheaf on $C$. Then,
$\nu^\star(\E)$ may have torsion. However, the torsion subsheaf of $\E$ may be described in terms of the stalks of $\E$ at the nodes, using Seshadri's classification of the stalks of torsion free sheaves at nodes (\cite{Sesh}, huiti\`eme partie, Proposition 2).
Next, we form $\F:=\nu^\star(\E)/\ul{\rm Tors}(\nu^\star(\E))$. The description of the torsion submodule of $\nu^\star(\E)$ shows that there are only finitely many possibilities for the Euler characteristic of $\F$ (\cite{SchNod2}, Proposition 3.1, \cite{MunThesis}, Theorem 4.2.10, \cite{MunPap2}, Proposition 5.3). On $C$, we now get the short exact sequence 
\begin{equation}
\label{eq:PresentTF}
\begin{CD}
0 @>>> \E @>>> \nu_\star(\F) @>>> {\mathcal R} @>>> 0.
\end{CD}
\end{equation}
Let $N_1,...,N_c$ be the nodes of $C$, and $\mu_i,\nu_i$ the two points of $\widetilde{C}$ that lie over $N_i$, $i=1,...,c$. Then, $R_i:={\mathcal R}\otimes \C(N_i)$ is a finite dimensional $\C$-vector space, and $\nu_\star(\F)\otimes \C(N_i)$ may be identified with $\F_{|\{\mu_i\}}\oplus \F_{|\{\nu_i\}}$. In other words, the above short exact sequence provides us with a generalized parabolic structure $\ul q$ on $\F$. Again, the dimension of $R_i$ is determined by the structure of the stalk of $\E$ at $N_i$, $i=1,...,t$ (\cite{SchNod2}, Proposition 3.1, \cite{MunThesis}, Theorem 4.2.10, \cite{MunPap2}, Proposition 5.3). A homomorphism $\varphi\colon (\E^{\otimes a})^{\oplus b}\lra \Oh_C$ induces a homomorphism $\widetilde{\varphi}\colon (\F^{\otimes a})^{\oplus b}\lra \Oh_{\widetilde{C}}$.
Note that any weighted filtration $(\E_\bullet,m_\bullet)$ of $\E$ induces a weighted filtration $(\F_\bullet, m_\bullet)$ of $\F$. Since the total ranks of $\E$ and $\F$ are equal, it follows readily that 
\[
\mu(\F_\bullet, m_\bullet;\widetilde{\varphi})=\mu(\E_\bullet, m_\bullet;\varphi).
\]
Furthermore, setting $\ul 1=(1,...,1)$, it is also true that 
\[
\chi_{\ul 1}(\F_\bullet,m_\bullet)=\chi(\E_\bullet,m_\bullet).
\]
This is immediate from the definitions and was originally used by Bhosle \cite{Bhosle}, Proposition 4.2, in the context of semistable torsion free sheaves on nodal curves, by the second author in \cite{SchNod}, Proposition 5.2.2, \cite{SchNod2}, Proposition 3.2, in the context of tensor fields and principal bundles on irreducible nodal curves, and by the first author in \cite{MunThesis}, Lemma  4.4.4, \cite{MunPap2}, Proposition 5.9, in the context of tensor fields and principal bundles on reducible nodal curves. These identities show that, for $\delta>0$, the $(a,b)$-GPS-swamp $(\F,\ul q,\widetilde{\varphi})$ is $(\ul 1,\delta)$-semistable if and only if the swamp $(\E_\bullet,\varphi)$ is $\delta$-semistable. We fixed the total rank $r$ and the Euler characteristic $\chi$ of $\E$. This implies that $\F$ has also total rank $r$, and, as pointed out before, there are only finitely many possibilities for the Euler characteristic of $\F$. In view of these facts, the theorem follows immediately from Theorem \ref{thm:MainAux} and Corollary \ref{cor:MainAux}.
\end{proof}

\section{Singular principal bundles on nodal curves}\label{sect:SingPrinz}

As in the last section, we let $C$ be a connected projective curve with at most nodes as singularities. Suppose that $G$ is a semisimple linear algebraic group and $\sigma\colon G\lra \SL(V)$ a faithful representation. A \it pseudo $G$-bundle \rm is a pair $(\E,\tau)$ which consists of a torsion free $\Oh_C$-module $\E$ of uniform rank $r$, $r:=\dim_\C(V)$, and a homomorphism
\[
\tau\colon \ul{\rm Sym}^\star(\E\otimes V)^G\lra \Oh_C
\]
of $\Oh_C$-algebras. We set 
\[
\ul{\rm Hom}(V\otimes\Oh_C,\E^\vee)\catqot G:=\ul{\rm Spec}\bigl(\ul{\rm Sym}^\star(\E\otimes V)^G\bigr).
\]
Then, the datum of $\tau$ corresponds to the datum of a section 
\[
s\colon C\lra \ul{\rm Hom}(V\otimes\Oh_C,\E^\vee)\catqot G.
\]
Let $C^\circ\subset C$ be the complement of the nodes. The sheaf $\E_{|C^\circ}$ is locally free of rank $r$ and 
\[
\ul{\rm Isom}(V\otimes\Oh_{C^\circ},\E_{|C^\circ}^\vee)/G\subset \ul{\rm Hom}(V\otimes\Oh_C,\E^\vee)\catqot G
\]
is the frame bundle of the locally free sheaf $\E^\vee_{|C^\circ}$ on $C^\circ$. A pseudo $G$-bundle $(\E,\tau)$ is said to be a \it singular principal $G$-bundle\rm, if 
\[
U(\E,\tau):=s^{-1}(\ul{\rm Isom}(V\otimes \Oh_{C^\circ},\E_{|C^\circ})/G)\subset C^\circ 
\]
is dense in $C$. By means of the base change diagram
$$
\xymatrix{
\mathcal{P}(\E,\tau) \ar[rr]\ar[d] && \ul{\rm Isom}(V\otimes\Oh_{C^\circ},\E_{|C^\circ}^\vee)\ar[d]
\\
U(\E,\tau) \ar[rr]^-{s_{|U(\E,\tau)}} && \ul{\rm Isom}(V\otimes\Oh_{C^\circ},\E_{|C^\circ}^\vee)/G
}
$$
we obtain the principal $G$-bundle ${\mathcal P}(\E,\tau)$ on $U(\E,\tau)$.
\begin{Rem}
As before, we let ${\mathbb K}_i$ be the function field of $C_i$ and $\eta_i:={\rm Spec}({\mathbb K}_i)$ the generic point of that component, $i=1,...,t$. A pseudo $G$-bundle $(\E,\tau)$ is a singular principal $G$-bundle if and only if 
\[
s(\eta_i)\in {\rm Isom}\bigl(V\mathop{\otimes}_\C{\mathbb K}_i,{\mathbb E}(i)\bigr)/G_{{\mathbb K}_i},\q i=1,...,t.
\]
Here, ${\mathbb E}(i):=\E_{|\{\eta_i\}}$ and $G_{{\mathbb K}_i}:=G\mathop{\times}\limits_{{\rm Spec}(\C)} {\rm Spec}({\mathbb K}_i)$, $i=1,...,t$.
\end{Rem}
A one parameter subgroup $\la\colon \C^\star\lra G$ defines a parabolic subgroup $Q_G(\la)$ of $G$. Via $G\stackrel{\sigma}{\subset} \SL(V)\subset \GL(V)$, we may view $\la$ as a one parameter subgroup of $\GL(V)$ and get a parabolic subgroup $Q_{\GL(V)}(\la)$ ($\supset Q_G(\la)$) of $\GL(V)$. If $\la$ is the constant one parameter subgroup, then $Q_G(\la)=G$ and $Q_{\GL(V)}(\la)=\GL(V)$. Suppose that $\ul\la=(\la_1,...,\la_t)$ is a tuple in which $\la_i$ is a one parameter subgroup of $G$, $i=1,...,t$, and which is non-trivial in the sense that at least one of the entries of $\ul\la$ is non-constant and that $(\E,\tau)$ is a singular principal $G$-bundle. A \it reduction \rm of $(\E,\tau)$ to $\ul \la$ is given by a tuple $\ul\beta=(\beta_1,...,\beta_t)$ of sections 
\[
\beta_i\colon U_i'\lra {\mathcal P}(\E,\tau),
\]
$U_i'\subset U\cap C_i$ a non-empty open subset, $i=1,...,t$. 
\begin{Rem}
Again, it suffices to specify $\beta_i$ at the generic point $\eta_i$, $i=1,...,t$.
\end{Rem}
Now, we use the notation introduced in the proof of Lemma \ref{lem:GenSemStab}. So, we let ${\mathbb E}=({\mathbb E}(1),...,{\mathbb E}(t))$ with ${\mathbb E}(i)=\E_{|\{\eta_i\}}$, $i=1,...,t$.
The reduction $\beta_{i|\{\eta_i\}}$ and $\la_i$ define a weighted filtration $({\mathbb E}^i_\bullet,\Gamma^i_\bullet)$ of ${\mathbb E}(i)$, $i=1,...,t$. (If $\la_i$ is constant, then $({\mathbb E}^i_\bullet,\Gamma^i_\bullet)=(0\subsetneq {\mathbb E}(i), (0))$.) Since $\sigma\circ \la_i$ is a one parameter subgroup of the 
special linear group, the process described in the proof of Lemma \ref{lem:GenSemStab} gives a weighted filtration $(\E({\ul\beta})_\bullet, m({\ul\beta})_\bullet)$ of $\E$.
\par 
A singular principal $G$-bundle $(\E,\tau)$ is \it (semi)stable\rm, if for every non-trivial tuple $\ul\la=(\la_1,...,\la_t)$ of one parameter subgroups of $G$ and every reduction $\ul\beta=(\beta_1,...,\beta_t)$ of $(\E,\tau)$ to $\ul\la$, the inequality
\[
\chi(\E({\ul\beta})_\bullet, m({\ul\beta})_\bullet) (\ge) 0
\]
is satisfied.
\begin{Thm}\label{thm:moduli-stb}
Let $G$ be a semisimple linear algebraic group, $\sigma\colon G\lra \SL(V)$ a faithful representation of $G$, $C$ a connected nodal projective curve, $\L$ an ample line bundle on $C$, and $\chi\in\Z$ an integer. Then, there exists a projective moduli scheme ${\mathcal S\mathcal P\mathcal B}_{\sigma/C/\L/\chi}$ for semistable principal $G$-bundles $(\E,\tau)$ with $\chi(\E)=\chi$. 
\end{Thm}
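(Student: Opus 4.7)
The plan is to deduce the theorem from the asymptotic results of Section \ref{sec:ParSwa} combined with the construction of moduli spaces of $\delta$-semistable pseudo $G$-bundles from \cite{MunPap}. First I would recall that, after choosing homogeneous generators of a fixed common degree $a$ (and number $b$) for the finitely generated graded $\C$-algebra $\mathrm{Sym}^\star(V\otimes V^\vee)^G$, the datum of a pseudo $G$-bundle $(\E,\tau)$ is equivalent to the datum of an $(a,b)$-swamp $(\E,\varphi)$ on $C$ (up to a fixed twist by a power of $\L$, which plays no role for the stability analysis). According to \cite{MunPap}, for every $\delta\in\Q_{>0}$ there is a projective moduli scheme $\mathcal{SPB}(\sigma)^{\delta\hbox{-}\mathrm{ss}}_{C,\chi}$ of S-equivalence classes of $\delta$-semistable pseudo $G$-bundles $(\E,\tau)$ with $\E$ of uniform rank $r=\dim_\C(V)$ and $\chi(\E)=\chi$.

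The key step is to select, via Theorem \ref{thm:MainAux2}.ii), a rational number $\delta_\infty>0$ such that, for every $\delta\ge\delta_\infty$, an $(a,b)$-swamp $(\E,\varphi)$ with these numerical invariants is $\delta$-(semi)stable if and only if it is asymptotically (semi)stable. Fixing such a $\delta$, I would then identify the resulting moduli space as the one desired by $\mathcal{SPB}_{\sigma/C/\L/\chi}:=\mathcal{SPB}(\sigma)^{\delta\hbox{-}\mathrm{ss}}_{C,\chi}$, provided two compatibilities hold: every $\delta$-semistable pseudo $G$-bundle is a singular principal $G$-bundle, and the intrinsic semistability of Section \ref{sect:SingPrinz} matches asymptotic semistability of the associated swamp.

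For the first compatibility, asymptotic semistability implies $\mu(\E_\bullet,m_\bullet;\varphi)\ge 0$ for every weighted filtration, hence by Lemma \ref{lem:GenSemStab} the generic tensor field $w$ attached to $\varphi$ is $\varrho$-semistable. Combining Lemma \ref{lem:GIT} with the observation that the non-isomorphism locus inside $\ul{\mathrm{Hom}}(V\otimes\Oh_C,\E^\vee)\catqot G$ cuts out, at each generic point $\eta_i$, an unstable locus for the $\SL({\mathbb E}(i))$-action, one concludes that $s(\eta_i)\in\ul{\mathrm{Isom}}(V\otimes_\C{\mathbb K}_i,{\mathbb E}(i))/G_{{\mathbb K}_i}$ for $i=1,...,t$, so that $(\E,\tau)$ is a singular principal $G$-bundle. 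For the second compatibility, every reduction $\ul\beta$ of $(\E,\tau)$ to a non-trivial tuple $\ul\la=(\la_1,...,\la_t)$ of one-parameter subgroups of $G$ produces, via the recipe of Section \ref{sect:SingPrinz} and the procedure of Lemma \ref{lem:GenSemStab}, a weighted filtration $(\E(\ul\beta)_\bullet,m(\ul\beta)_\bullet)$ of $\E$; because $\sigma\circ\la_i$ factors through $\SL(V)$, each $\la_i$ contributes trivial total weight, so one verifies by a direct computation that $\mu(\E(\ul\beta)_\bullet,m(\ul\beta)_\bullet;\varphi)=0$. Conversely, any weighted filtration with $\mu=0$ arises generically from a tuple of one-parameter subgroups of $G$ — this is the standard GIT translation from parabolic subgroups of $\SL({\mathbb E}(i))$ that stabilise $w_i$ to parabolic subgroups of $G$ defined over ${\mathbb K}_i$. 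Under this dictionary, the asymptotic semistability condition ``$\mu=0$ implies $\chi(\E_\bullet,m_\bullet)\ge 0$'' becomes exactly the intrinsic inequality $\chi(\E(\ul\beta)_\bullet,m(\ul\beta)_\bullet)\ge 0$.

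The main obstacle is this last dictionary between weighted filtrations of $\E$ with $\mu(\E_\bullet,m_\bullet;\varphi)=0$ and reductions of $(\E,\tau)$ to one-parameter subgroups of $G$; on a reducible nodal curve this requires reconciling the component-wise filtrations ${\mathbb E}^j_\bullet$ of ${\mathbb E}(j)$ (as they appear in the proof of Lemma \ref{lem:GenSemStab}) with the defining data of the reductions $\beta_j$ at the generic points $\eta_j$, and one has to take care that the constraint \eqref{eq:Constraint} on weights is precisely what is enforced by the image of the $\la_j$ landing in $\SL(V)$. Once this translation is carried out carefully, projectivity of $\mathcal{SPB}_{\sigma/C/\L/\chi}$ is inherited from $\mathcal{SPB}(\sigma)^{\delta\hbox{-}\mathrm{ss}}_{C,\chi}$, and the theorem follows.
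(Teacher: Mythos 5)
Your proposal follows essentially the same route as the paper: pass from pseudo $G$-bundles to $(a,b)$-swamps, invoke Theorem \ref{thm:MainAux2} to replace $\delta$-semistability by asymptotic semistability for $\delta\ge\delta_\infty$, use Lemma \ref{lem:GenSemStab} and Lemma \ref{lem:GIT} to identify generic semistability with the singular-principal-bundle condition, and translate filtrations with $\mu=0$ into reductions to one parameter subgroups of $G$. The ``dictionary'' you flag as the main obstacle is exactly the point where the paper appeals to \cite{GLSS}, Lemma 2.1.3 and Proposition 2.1.4, so your outline is correct and matches the published argument.
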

\begin{proof}
The first author defined a notion of $\delta$-semistability for pseudo $G$-bundles on a reducible nodal curve, $\delta\in \Q_{>0}$. The moduli space mentioned in the theorem is the moduli space ${\mathcal S\mathcal P\mathcal B}(\sigma)_\chi^{\delta\hbox{-}\rm ss}$ of $\delta$-semistable pseudo $G$-bundles from \cite{MunPap} for large values of $\delta$. In order to understand this, let us discuss some elements of the construction.
\par 
There exist positive integers $a$ and $b$, such that one may assign to every pseudo $G$-bundle $(\E,\tau)$ an $(a,b)$-swamp $(\E,\varphi_\tau)$. For this note, that, for a torsion free $\Oh_C$-module $\E$ and $d>0$, the Reynolds operator yields a projection
\[
(\E\otimes V)^{\otimes d} \lra \ul{\rm Sym}^d(\E\otimes V)\lra \ul{\rm Sym}^d(\E\otimes V)^G.
\]
Since invariant rings are finitely generated, $\tau$ is determined by a homomorphism
\[
\psi_\tau\colon \bigoplus_{d=1}^e (\E\otimes V)^{\otimes d}\lra \Oh_C,
\]
$e>0$ sufficiently large. Homogenizing $\psi_\tau$ yields $\varphi_\tau$. We refer to \cite{MunPap}, \S 5, for the details.
\par 
Now, the assignment $(\E,\tau)\lma (\E,\varphi_\tau)$ is injective on the level of isomorphy classes and $(\E,\tau)$ is $\delta$-(semi)stable if and only if the associated $(a,b)$-swamp $(\E,\varphi_\tau)$ is $\delta$-(semi)stable in the sense of Section \ref{sec:ParSwa}, $\delta\in \Q_{>0}$.
So, by Theorem \ref{thm:MainAux2}, we need to show that a) $(\E,\tau)$ is a singular principal $G$-bundle if and only if $\varphi_\tau$ is generically semistable and that b), for a weighted filtration $(\E_\bullet, m_\bullet)$ of $\E$, the condition $\mu(\E_\bullet,m_\bullet;\varphi_\tau)=0$ is equivalent to the condition that there are a non-trivial tuple $\ul\la=(\la_1,...,\la_t)$ of one parameter subgroups of $G$ and a reduction $\ul\beta$ of $(\E,\tau)$ to $\ul\la$ with $(\E_\bullet, m_\bullet)=(\E(\ul\beta)_\bullet, m(\ul\beta)_\bullet)$.
\par 
As far as a) is concerned, we have seen in the proof of Lemma \ref{lem:GenSemStab} that $\varphi_\tau$ is generically semistable if and only if, for every index $i\in \{\, 1,...,t\,\}$, the point $w_i'$ is non-zero and semistable with respect to the action of $\SL({\mathbb E}(i))$ on $W_i'$. By \cite{GLSS}, Lemma 2.1.3, this is equivalent to the property that there exists a non-empty open subset $U_i'\subset C^\circ\cap C_i$ with $s(U_i')\subset \ul{\rm Isom}(V\otimes\Oh_{C^\circ},\E_{|C^\circ}^\vee)/G$, $i=1,...,t$.
\par 
For b), we observe that the proofs of Lemma \ref{lem:GIT} and Lemma \ref{lem:GenSemStab} show that 
\[
\mu(\E_\bullet, m_\bullet;\varphi)=\max\bigl\{\,\mu(\la_i, w_i')\,|\, i=1,...,t\,\bigr\}=0
\]
happens if and only if $\la_i$ is a one parameter subgroup of $\SL({\mathbb E}(i))$ and $\mu(\la_i,w_i')=0$, $i=1,...,t$. This includes the possibility that $\la_i$ is constant, for some of the indices $i\in\{\, 1,...,t\,\}$. Proposition 2.1.4 in \cite{GLSS} shows that one may assume without loss of generality that $\la_i$ is conjugate to a one parameter subgroup $\la_i'$ of $G$, so that $({\mathbb E}^i_\bullet, \Gamma^i_\bullet)$ comes from a reduction $\beta_i\colon U'_i\lra {\mathcal P}(\E,\tau)/Q_G(\la_i')$, for some open subset $U_i'\subset U\cap C_i$, $i=1,...,t$.
\end{proof}
\subsection*{Good choices of the faithful representation}
Suppose that $b\colon V\mathop{\otimes}\limits_\C V\lra \C$ is a non-degenerate bilinear form. Let $\psi\colon V \lra V^\vee$, $v\lma (w\lma b(v,w))$, be the corresponding automorphism. We say that $\sigma$ \it fixes \rm $b$, if 
\[
\forall g\in G\forall v,w\in V:\q b\bigl(\sigma(g)(v),\sigma(g)(w)\bigr)=b(v,w).
\]
We will work with the commutative diagram
$$
\xymatrix{
{\ul{\rm Hom}(V\otimes \Oh_X,\E^\vee)}\ar[rr]^-{q}\ar[dr]^-{p} &&
{\ul{\rm Hom}(V\otimes \Oh_X,\E^\vee)\catqot G} \ar[dl]_-{\ol p}
\\
& X &
}
$$
By the universal property of 
\[
{\mathcal H}:=\ul{\rm Hom}(V\otimes \Oh_X,\E^\vee):=\ul{\rm Spec}\bigl(\ul{\rm Sym}^\star(\E\otimes V)\bigr),
\]
there is a tautological homomorphism
\[
p^\star\bigl(\ul{\rm Sym}^\star(\E\otimes V)\bigr)\lra \Oh_{{\mathcal H}}
\]
of $\Oh_{{\mathcal H}}$-algebras. The restriction of this homomorphism to the component of degree one yields the homomorphism
\[
p^\star(\E)\lra V^\vee\otimes \Oh_{{\mathcal H}}.
\]
Let $\widetilde{b}\colon V^\vee\mathop{\otimes}\limits_\C V^\vee \lra \C$ be the bilinear form defined by $\psi^{-1}$. Then, the above homomorphism and $\widetilde{b}$ yield a bilinear pairing 
\[
\widetilde{b}_{\rm taut}\colon p^\star(\E\otimes \E)\cong p^\star(\E)\otimes p^\star(\E)\lra \Oh_{\mathcal H}.
\]
Push this forward to $\ul{\rm Hom}(V\otimes \Oh_C,\E^\vee)\catqot G$ and note that 
\[
p^\star(\E\otimes \E)\cong q^\star\bigl(\ol p^\star(\E\otimes \E)\bigr).
\]
So, we get 
\[
\ol p^\star(\E\otimes \E)\otimes q_\star(\Oh_{\mathcal H}) \lra q_\star(\Oh_{\mathcal H}).
\]
Now,
\[
\Oh_{{\mathcal H}\catqot G}=q_\star(\Oh_{\mathcal H})^G.
\]
By the existence and properties of the Reynolds operator, $\Oh_{{\mathcal H}\catqot G}$ is a direct summand of the $\Oh_{{\mathcal H}\catqot G}$-module $q_\star(\Oh_{\mathcal H})$. This means that we obtain a homomorphism
\[
\ol{b}_{\rm taut}\colon \ol p^\star(\E\otimes \E)\lra \Oh_{{\mathcal H}\catqot G}
\]
of $\Oh_{{\mathcal H}\catqot G}$-modules. The assumption on $\sigma$ and $b$ implies that the pullback of $\ol{b}_{\rm taut}$ via $q$ agrees over $p^{-1}(C^\circ)$ with $\widetilde{b}_{\rm taut}$.
Pulling back $\ol{b}_{\rm taut}$ via the section $s$ yields the bilinear form 
\[
\widetilde{b}(\E,\tau)\colon \E\otimes \E\lra \Oh_C.
\]
Our last observation implies that the restriction of $\widetilde{b}(\E,\tau)$ to $U(\E,\tau)$ is non-degene\-rate. So, the induced homomorphism
\[
\psi(\E,\tau)\colon \E\lra \E^\vee
\]
is an isomorphism over $U(\E,\tau)$. Since $\E$ is torsion free, the homomorphism $\psi(\E,\tau)$ is injective
\begin{Prop}
\label{prop:EulerGleich}
Assume that $\chi(C,\E)=\dim_\C(V)\cdot \chi(C,\Oh_C)$, Then, the homomorphism $\psi(\E,\tau)$ is an isomorphism.
\end{Prop}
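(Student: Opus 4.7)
The plan is to combine the already-established injectivity of $\psi(\E,\tau)$ with a comparison of Euler characteristics. Since $\psi(\E,\tau)\colon \E\lra \E^\vee$ is injective, I would work with the short exact sequence
\[
0\lra \E \lra \E^\vee \lra Q \lra 0,
\]
and use that $\psi(\E,\tau)$ is an isomorphism on the dense open $U(\E,\tau)$, so that the cokernel $Q$ is supported on the finite set $C\setminus U(\E,\tau)$. In particular $Q$ is a torsion sheaf of finite length, whence $\chi(Q)=\dim_\C H^0(C,Q)\ge 0$ with equality if and only if $Q=0$. Thus the proposition reduces to the identity $\chi(\E)=\chi(\E^\vee)$.

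The main step is to establish this identity by Serre duality. Nodal curves are Gorenstein, so the dualizing sheaf $\omega_C$ is an invertible sheaf of degree $2p_a(C)-2$. Since $\E$ is torsion free on the one-dimensional scheme $C$, it is Cohen--Macaulay, so Serre duality gives
\[
\chi(\E)=-\chi\bigl(\ul{\rm Hom}(\E,\omega_C)\bigr).
\]
Because $\omega_C$ is a line bundle, $\ul{\rm Hom}(\E,\omega_C)\cong \E^\vee\otimes \omega_C$, and since $\E^\vee$ inherits uniform rank $r$ from $\E$, the Riemann--Roch-type identity $\chi(\F\otimes L)=\chi(\F)+\rk(\F)\cdot\deg(L)$ for a line bundle $L$ and a torsion-free sheaf $\F$ of uniform rank (proved by induction using short exact sequences supported at smooth points of $C$) yields
\[
\chi(\E^\vee\otimes \omega_C)=\chi(\E^\vee)+r\cdot\bigl(2p_a(C)-2\bigr).
\]
Combining these two computations produces the symmetric identity
\[
\chi(\E)+\chi(\E^\vee)=2r\cdot\chi(\Oh_C).
\]

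To finish, I would invoke the hypothesis $\chi(C,\E)=\dim_\C(V)\cdot \chi(C,\Oh_C)=r\cdot \chi(\Oh_C)$: together with the symmetric identity this forces $\chi(\E^\vee)=\chi(\E)$, and then additivity of $\chi$ on the short exact sequence gives $\chi(Q)=0$. Combined with $\chi(Q)\ge 0$, this forces $Q=0$. Hence $\psi(\E,\tau)$ is surjective as well as injective, so it is an isomorphism. The only non-formal ingredient is the Serre duality / Riemann--Roch input for torsion-free (not necessarily locally free) sheaves at the nodes, which I expect to be the main point to treat carefully; however, it is standard for Gorenstein curves and the rest of the argument is purely formal.
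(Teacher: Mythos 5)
Your proposal is correct and takes essentially the same route as the paper: the paper likewise reduces the statement to the identity $\chi(C,\E)=\chi(C,\E^\vee)$, which it proves in the appendix (Proposition A.2 ii)) from $\chi(C,\E^\vee)=-\chi(C,\E)+2r\cdot\chi(C,\Oh_C)$ via duality on the Gorenstein curve, and then concludes surjectivity from the already established injectivity of $\psi(\E,\tau)$. Your explicit torsion-cokernel argument and your smooth-point induction for $\chi(\F\otimes L)=\chi(\F)+r\deg(L)$ are just alternative justifications of steps the paper handles implicitly, respectively via the reflexivity result of Vasconcelos together with the dualizing-sheaf property and via the normalization exact sequence.
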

\begin{proof}
Proposition \ref{prop:PrepareIso}, ii), in the appendix shows that $\chi(C,\E)=\chi(C,\E^\vee)$. This implies that the injective homomorphism $\psi(\E,\tau)\colon \E\lra \E^\vee$ is also surjective.
\end{proof}
\begin{Thm}\label{thm:good-choice}
Suppose that there is a non-degenerate bilinear form $b\colon  V\mathop{\otimes}\limits_\C V\lra \C$ which is fixed by $\sigma$. Then, 
\[
U(\E,b)=C^\circ
\]
holds for every singular principal $G$-bundle $(\E,\tau)$ on $C$ with $\chi(C,\E)=\dim_\C(V)\cdot \chi(C,\Oh_C)$. 
\end{Thm}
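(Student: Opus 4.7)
The strategy is to combine Proposition~\ref{prop:EulerGleich} with a pointwise analysis of the tautological bilinear form on the locally free locus. The Euler characteristic hypothesis together with Proposition~\ref{prop:EulerGleich} guarantees that $\psi(\E,\tau)\colon \E\lra \E^\vee$ is an isomorphism on all of $C$. Since $U(\E,\tau)\subset C^\circ$ by definition, it suffices to show $C^\circ\subseteq U(\E,\tau)$, that is, that for every $x\in C^\circ$ the point $s(x)$ lies in $\ul{\rm Isom}(V\otimes\Oh_{C^\circ},\E_{|C^\circ}^\vee)/G$.

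Fix $x\in C^\circ$, so $\E$ is locally free of rank $r=\dim_\C V$ near $x$. The fiber of ${\mathcal H}\catqot G$ over $x$ is the affine GIT quotient ${\rm Hom}(V,\E_x^\vee)\catqot G$, and $s(x)$ corresponds to a closed $G$-orbit. Pick any representative $A\in{\rm Hom}(V,\E_x^\vee)$ in that orbit and write $A^T\colon\E_x\lra V^\vee$ for its transpose; then $A^T$ is the value at $A$ of the tautological homomorphism $p^\star\E\lra V^\vee\otimes\Oh_{\mathcal H}$. Unpacking the construction of $\widetilde b(\E,\tau)$ preceding Proposition~\ref{prop:EulerGleich}, and using that $\widetilde b_{\rm taut}$ is already $G$-invariant because $\sigma$ fixes $b$ (so that $q^\star\ol b_{\rm taut}$ coincides with $\widetilde b_{\rm taut}$ over $p^{-1}(C^\circ)$), one finds that the stalk of $\widetilde b(\E,\tau)$ at $x$ is the bilinear form
$$
b_A(v,w):=\widetilde b\bigl(A^T(v),\,A^T(w)\bigr),\qquad v,w\in \E_x,
$$
which is independent of the orbit representative.

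By Proposition~\ref{prop:EulerGleich}, $\psi(\E,\tau)_x\colon \E_x\lra \E_x^\vee$ is an isomorphism, so $b_A$ is non-degenerate. Since $\widetilde b$ is itself non-degenerate on $V^\vee$, non-degeneracy of $b_A$ forces $A^T$ to be injective, hence an isomorphism by the equality $\dim \E_x=\dim V^\vee=r$, and therefore $A$ is an isomorphism as well. This shows that the closed orbit over $s(x)$ meets the open locus of isomorphisms, so $s(x)\in \ul{\rm Isom}(V,\E_x^\vee)/G$, as required. The main delicate point is the pointwise identification of $\widetilde b(\E,\tau)$ with $b_A$ for an orbit representative $A$: this is straightforward on the locally free locus $C^\circ$, where $s$ can be computed via the affine GIT map $q$, but it is the place where the hypothesis that $\sigma$ fixes $b$ and the Reynolds splitting of $q_\star(\Oh_{\mathcal H})$ are essential.
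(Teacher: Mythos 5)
Your proposal is correct and follows essentially the same route as the paper: invoke Proposition~\ref{prop:EulerGleich} to see that $\psi(\E,\tau)$ is an isomorphism, then at a point $c\in C^\circ$ factor its fiber through a lift $h\in{\rm Hom}(V,\E^\vee\langle c\rangle)$ of $s(c)$ (the paper writes $\psi(\E,\tau)\langle c\rangle=h\circ\psi^{-1}\circ h^\vee$, which is exactly your identification of the fiber of $\widetilde b(\E,\tau)$ with $b_A$), and conclude by a dimension count that $h$ is an isomorphism, so $c\in U(\E,\tau)$. Your extra remarks on closed orbits and independence of the representative are harmless refinements of the same argument.
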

\begin{proof}
Let $(\E,\tau)$ be a singular principal $G$-bundle on $C$ and $c\in C^\circ$. We will use ${}_-\langle c\rangle$ as a shorthand notation for ${}_-\mathop{\otimes}\limits_{\Oh_{C,c}} \C(c)$. By construction, $\psi(\E,\tau)\langle c\rangle$ factorizes as 
\[
\E\langle c\rangle \stackrel{h^\vee}{\lra} V^\vee\stackrel{\psi^{-1}}{\lra} V\stackrel{h}{\lra}\E^\vee\langle c\rangle.
\]
Here, $h\in {\rm Hom}(V,\E^\vee\langle c\rangle)$ is a homomorphism which maps to $s(c)$ under the quotient map $q$. Obviously, $h$ must be an isomorphism, i.e., $c\in U(\E,\tau)$.
\end{proof}

\section*{Summary and outlook}

The results of the present note and \cite{MunPap2} fully generalize the results of \cite{SchNod2} to reducible nodal curves. The explicit estimates given in Theorem \ref{thm:MainAux} and the observations on good choices of the faithful representation of the structure group together with the construction in \cite{MunUniv} yield a reasonable generalization of Pandharipande's moduli space \cite{Pan}. Possible applications include the study of conformal blocks on stable curves and modular interpretations of the degenerations obtained by Manon \cite{Man} and Belkale and Gibney \cite{BG}. It will also be interesting to investigate, if the techniques developed by Balaji \cite{Bal} may be extended to get a Hilbert compactification which generalizes the work \cite{SchHilb} by the second author for $\GL_r(\C)$.

\section*{Appendix}

\renewcommand\thesection{\Alph{section}}
\setcounter{section}{1}
\setcounter{Ex}{0}
We work in the setting outlined at the beginning of Section \ref{sec:ParSwa}, except that we won't make use of the set $N$. Let us fix an invertible $\Oh_C$-module ${\mathcal N}$. For a coherent $\Oh_C$-module $\G$, we define the \it ${\mathcal N}$-dual \rm 
of $\G$ as $\G_{\mathcal N}^\vee:=\ul{\rm Hom}(\G,{\mathcal N})$, and the \it ${\mathcal N}$-reflexive hull \rm of $\G$ as $\G^{\vee\vee}_{\mathcal N}:=(\G^\vee_{\mathcal N})^\vee_{\mathcal N}$. As usual, we write $\G^\vee$ instead of $\G^\vee_{\Oh_C}$. Since $C$ is a Gorenstein variety, $\omega_C$ is also an  invertible $\Oh_C$-module. In this case, we write $\G^\circ:=\G^\vee_{\omega_C}$ and $\G^{\circ\circ}:=\G^{\vee\vee}_{\omega_C}$. For any  invertible $\Oh_C$-module ${\mathcal N}$, there is a canonical homomorphism $\iota_{\mathcal N}\colon \G\lra \G^{\vee\vee}_{\mathcal N}$. Its kernel is the torsion submodule of $\G$.
\begin{Rem}
\label{rem:ComputeDegTensor}
i) Since ${\mathcal N}$ is invertible, the natural homomorphism
\[
\ul{\rm Hom}(\G,\Oh_X)\mathop{\otimes}_{\Oh_X} {\mathcal N} \lra \ul{\rm Hom}(\G,{\mathcal N})
\]
is an isomorphism.
\par 
ii) Let ${\mathcal N}$ be an invertible $\Oh_X$-module and define $n_i:=\deg({\mathcal N}_{|C_i})$, $i=1,...,t$. For a torsion free $\Oh_X$-module $\E$, we get the short exact sequence \eqref{eq:PresentTF}. The corresponding sequence for $\E\otimes {\mathcal N}$ is \eqref{eq:PresentTF}$\otimes{\mathcal N}$, by the projection formula. This shows 
\[
\chi(X,\E\otimes {\mathcal N}) =\chi\Bigl(X,\nu_\star \bigl(\F\otimes \nu^\star({\mathcal N})\bigr)\Bigr)-\dim_\C({\mathcal R}).
\]
Since the normalization $\nu\colon \widetilde{X}\lra X$ is a finite morphism, we have 
\[
\chi\Bigl(X,\nu_\star \bigl(\F\otimes \nu^\star({\mathcal N})\bigr)\Bigr)
=
\chi\bigl(\widetilde{X}, \F\otimes \nu^\star({\mathcal N})\bigr).
\]
The Riemann--Roch theorem on smooth curves implies 
\[
\chi\bigl(\widetilde{X}, \F\otimes \nu^\star({\mathcal N})\bigr)=r\cdot\sum_{i=1}^t n_i +\chi(\widetilde{X},\F).
\]
Continuing this way, we compute
\begin{align*}
\chi(X,\E\otimes {\mathcal N}) 
&= r\cdot\sum_{i=1}^t n_i +\chi(\widetilde{X},\F)-\dim_\C({\mathcal R})
\\
&= r\cdot\sum_{i=1}^t n_i +\chi\bigl(X,\nu_\star(\F)\bigr)-\dim_\C({\mathcal R})
\\
&= r\cdot\sum_{i=1}^t n_i +\chi(X,\E).
\end{align*}
\par 
iii) Using the notation from the previous part for ${\mathcal N}=\omega_X$, the formula 
\[
\chi(X,\omega_X)=\sum_{i=1}^t n_i+\chi(X,\Oh_X)
\]
and the equality $\chi(X,\omega_X)=-\chi(X,\Oh_X)$ which follows from Serre duality show
\[
\sum_{i=1}^t n_i=-2\cdot \chi(X,\Oh_X).
\]
\end{Rem}
\begin{Prop}
\label{prop:PrepareIso}
Let ${\mathcal N}$ be an invertible $\Oh_C$-module and $\E$ a torsion free $\Oh_C$-module of uniform rank $r$.
\par 
{\rm i)} The $\Oh_C$-module $\E^{\vee}_{\mathcal N}$ is torsion free as well, and the canonical homomorphism $\iota_{\mathcal N}\colon \E\lra \E^{\vee\vee}_{\mathcal N}$ is an isomorphism.
\par 
{\rm ii)} One has $\chi(C,\E^\circ)=-\chi(C,\E)$ and $\chi(C,\E^\vee)= -\chi(C,\E)+2\cdot r\cdot \chi(C,\Oh_C)$. In particular, if $\chi(C,\E)=r\cdot \chi(C,\Oh_C)$, then $\chi(C,\E^\vee)= \chi(C,\E)$.
\end{Prop}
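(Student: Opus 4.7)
The plan is to derive part i) from the local structure of torsion-free modules at the nodes of $C$ and then to deduce part ii) from Serre duality on the Gorenstein curve $C$, using Remark A.1 to convert between $\vee$, $\circ$, and tensor products with invertible sheaves.

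For part i), the claim is local on $C$. At a smooth point $\E$ is locally free and both assertions are immediate. At a node $p\in C$ the completed local ring is $R=\C[[x,y]]/(xy)$, a one-dimensional Gorenstein local ring. Over $R$, a finitely generated module $M$ is torsion free if and only if it is maximal Cohen--Macaulay, and for every such $M$ the module $\mathop{\rm Hom}\nolimits_R(M,R)$ is again maximal Cohen--Macaulay and the canonical biduality map $M\to\mathop{\rm Hom}\nolimits_R(\mathop{\rm Hom}\nolimits_R(M,R),R)$ is an isomorphism. This settles the case $\mathcal{N}=\Oh_C$. For an arbitrary invertible $\mathcal{N}$, Remark A.1 i) gives $\E^\vee_\mathcal{N}\cong\E^\vee\otimes \mathcal{N}$, and the adjunction $\mathop{\rm Hom}(\E^\vee\otimes\mathcal{N},\mathcal{N})\cong \mathop{\rm Hom}(\E^\vee,\Oh_C)$ identifies $\E^{\vee\vee}_\mathcal{N}$ with $\E^{\vee\vee}$, so biduality for $\vee$ implies biduality for $\vee_\mathcal{N}$.

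For part ii), I would apply Serre duality on the projective Gorenstein curve $C$. Part i) together with the local vanishing $\ul{\rm Ext}^i_{\Oh_C}(\E,\omega_C)=0$ for $i\ge 1$ --- also a consequence of the maximal Cohen--Macaulay property at the nodes --- implies that Serre duality takes the classical shape $H^i(C,\E)^\vee\cong H^{1-i}(C,\E^\circ)$ for $i=0,1$. Passing to alternating sums gives $\chi(C,\E^\circ)=-\chi(C,\E)$. For the second identity, Remark A.1 i) identifies $\E^\circ$ with $\E^\vee\otimes\omega_C$, and combining Remark A.1 ii) (applied to $\E^\vee$ and $\mathcal{N}=\omega_C$) with Remark A.1 iii) yields
\[
-\chi(C,\E)=\chi(C,\E^\circ)=r\cdot\sum_{i=1}^t\deg((\omega_C)_{|C_i})+\chi(C,\E^\vee)=-2r\chi(C,\Oh_C)+\chi(C,\E^\vee),
\]
so $\chi(C,\E^\vee)=-\chi(C,\E)+2r\chi(C,\Oh_C)$; the ``in particular'' assertion follows by substituting $\chi(C,\E)=r\chi(C,\Oh_C)$.

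The main obstacle is the local-algebra input at the nodes: one must verify that torsion-free modules over $\C[[x,y]]/(xy)$ are reflexive maximal Cohen--Macaulay modules and that their higher local Ext's into any invertible module vanish. These are standard facts, so the real difficulty lies in citing them correctly and in pinning down the appropriate form of Serre duality for Gorenstein projective curves rather than in any substantive new argument.
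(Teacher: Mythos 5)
Your proposal is correct and follows the same overall strategy as the paper: reduce part i) to a local statement at the nodes and deduce part ii) from duality on the Gorenstein curve plus the Euler-characteristic bookkeeping of Remark \ref{rem:ComputeDegTensor}. The differences are minor but worth recording. For i), the paper simply cites Vasconcelos (Corollary 2.3) for reflexivity of torsion free modules over Gorenstein rings, while you spell out the maximal Cohen--Macaulay argument over $\C[[x,y]]/(xy)$ and reduce the $\mathcal N$-dual to the ordinary dual via Remark \ref{rem:ComputeDegTensor} i); that is the same content, just made self-contained (one should also note that the identification $\E^{\vee\vee}_{\mathcal N}\cong\E^{\vee\vee}$ is compatible with the canonical maps $\iota_{\mathcal N}$ and $\iota_{\Oh_C}$, which is routine). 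For ii), the paper uses only the defining property of the dualizing sheaf, namely $h^1(C,\G)=h^0(C,\G^\circ)$ for \emph{every} coherent $\G$, applied once to $\E$ and once to $\E^\circ$, and then invokes biduality from i); this avoids any $\ul{\rm Ext}$-vanishing. Your route instead uses the symmetric form $H^i(C,\E)^\vee\cong H^{1-i}(C,\E^\circ)$, which does require $\ul{\rm Ext}^{\ge 1}_{\Oh_C}(\E,\omega_C)=0$; that vanishing follows from the same local Cohen--Macaulay/Gorenstein facts you already use in i), so your argument is sound, but it carries one extra local hypothesis that the paper's two-line trick with the weak duality statement renders unnecessary. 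The final computation deducing $\chi(C,\E^\vee)=-\chi(C,\E)+2r\cdot\chi(C,\Oh_C)$ from Remark \ref{rem:ComputeDegTensor} ii) and iii) is identical to the paper's.
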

\begin{proof}
i) This is a local problem and, so, follows from \cite{Vasc}, Corollary (2.3). 
\par 
ii) By Serre duality, 
\[
\chi(C,\E) = h^{0}(C,\E)-h^{0}(C,\E^\circ),\q
\chi(C,\E^\circ) = h^{0}(C,\E^\circ)-h^{0}(C,\E^{\circ\circ}).
\]
So, the first equality follows immediately from Part i). The formulas from Remark \ref{rem:ComputeDegTensor} show  $\chi(X,\E^\circ)=\chi(X,\E^\vee)-2\cdot r\cdot \chi(X,\Oh_X)$. Plugging this into the first equality gives the second one. 
\end{proof}
\begin{Rem}
i) By Remark \ref{rem:EnforceRiemannRoch}, the condition $\chi(C,\E)=r\cdot \chi(C,\Oh_C)$ is equivalent to the condition ${\deg}_{\ul\ell}(\E)=0$. 
\par 
ii) Similar computations appear, for irreducible curves, even on non-Gorenstein ones, in Section 3.1 of the (unpublished) work \cite{Cook}.
\end{Rem}


\begin{thebibliography}{999}
\bibitem{Bal}
Balaji, V.
``Torsors on semistable curves and degenerations.'' arXiv:1901.01529 (2019): 71 pp.

\bibitem{BG} 
Belkale, P. and Gibney, A. 
``On finite generation of the section ring of the determinant of cohomology line bundle.'' 
Trans.\ Amer.\ Math.\ Soc.\ 371, no.\ 10 (2019): 7199-242. 

\bibitem{Bhosle}
Bhosle, U.
``Generalised parabolic bundles and applications to torsionfree sheaves on nodal curves.'' 
Ark.\ Mat.\ 30 (1992): 187-215.

\bibitem{Bhosle2}
Bhosle, U.
`` Tensor Fields and Singular Principal Bundles.''
Int.\ Math.\ Res.\ Not.\ 2004, no.\ 57 (2004): 3057-77.

\bibitem{Cook}
Cook, Ph. R.
``Local and global aspects of the module theory of singular curves.''
PhD thesis, Liverpool (1993): 129 pp.

\bibitem{DM} 
Deligne, P. and  Mumford, D.
``The irreducibility of the space of curves of given genus.''
Inst.\ Hautes \'Etudes Sci.\ Publ.\ Math.\ 36 (1969): 75-109.

\bibitem{Fal} 
Faltings, G.
``Moduli-stacks for bundles on semistable curves.''
Math.\ Ann.\ 304 (1996): 489-515.

\bibitem{Gies} 
Gieseker, D.
``Lectures on moduli of curves'', 
Tata Institute of Fundamental Research Lectures on Mathematics and Physics, vol.\ 69, Springer-Verlag, Berlin--New York, 1982, iii+99 pp.

\bibitem{Gies2} 
Gieseker, D.
``A degeneration of the moduli space of stable bundles.''
J.\ Differential Geom.\ 19 (1984): 173-206.

\bibitem{GLSS} G\'omez, T. L.,  Langer, A., Schmitt, A. and  Sols, I.
``Moduli spaces for principal bundles in arbitrary characteristic.''
Adv.\ Math.\ 219 (2008): 1177-245.

\bibitem{Hart}
Hartshorne, R.
``Algebraic geometry.''
Graduate Texts in Mathematics, vol.\ 52, Springer-Verlag, New York--Heidelberg, 1977, xvi+496 pp.

\bibitem{Ke} 
Kempf, G. R.
``Instability in invariant theory.'' 
Ann.\ of Math.\ 108, no. 2 (1978): 299-316. 


\bibitem{Man} 
Manon, Ch.
``The algebra of conformal blocks.'' 
J. Eur. Math. Soc. 20 (2018): 2685-2715

\bibitem{MunThesis} 
Mu\~noz Casta\~neda, A. L.
``Principal $G$-bundles on nodal curves.''
PhD thesis, Berlin, 2017, ii+158 pp. Available at {\tt http://www.diss.fu-berlin.de/diss/content/below/index.xml\rm}.

\bibitem{MunPap} 
Mu\~noz Casta\~neda, A. L.
``On the moduli space of singular principal $G$-bundles over stable curves.'' Adv.\ Geom.\ (to appear), 
arXiv:1806.09741 (21018): 15 pp.

\bibitem{MunUniv} 
Mu\~noz Casta\~neda, A. L.
``A compactification of the universal moduli space of principal $G$-bundles.''
arXiv:1806.06300v3 (2020) 30 pp. 

\bibitem{MunPap2} 
Mu\~noz Casta\~neda, A. L.
``Generalized parabolic structures over smooth curves with many components and principal bundles on reducible nodal curves.'' 
arXiv:1910.13403v2 (2020) 27 pp. 

\bibitem{Pan} 
Pandharipande, R.
``A compactification over $\ol M_g$ of the universal moduli space of slope-semistable vector bundles'', 
J.\ Amer.\ Math.\ Soc.\ 9 (1996): 425-71.

\bibitem{Ram} 
Ramanathan, A.
``Moduli for principal bundles over algebraic curves I-II.'' 
Proc.\ Indian Acad.\ Sci.\ Math.\ Sci.\ 106 (1996): 301-28, 421-49.

\bibitem{SchMani}
Schmitt, A. H. W.
`` Singular Principal Bundles over Higher-Dimensional Manifolds and Their Moduli Spaces.''
Int.\ Math.\ Res.\ Not.\ 2002, no.\ 23 (2002): 1183-1209.

\bibitem{SchUniv} 
Schmitt, A. H. W.
``A universal construction for moduli spaces of decorated vector bundles over curves.''
Transform.\ Groups 9 (2004): 167-209.

\bibitem{SchHilb}
Schmitt, A. H. W.
``The Hilbert compactification of the universal moduli space of semistable vector bundles over smooth curves.''
J.\ Differential Geom.\ 66 (2004): 169-209.

\bibitem{SchNod}
Schmitt, A. H. W.
``Singular principal $G$-bundles on nodal curves.''
J.\ Eur.\ Math.\ Soc.\ 7, no. 2 (2005): 215-51.

\bibitem{SchNod2}
Schmitt, A. H. W.
``Moduli spaces for semistable honest singular principal bundles on a nodal curve which are compatible with degeneration. A remark on U.\ N.\ Bhosle's paper: ``Tensor fields and singular principal bundles''. ''
Int.\ Math.\ Res.\ Not.\ 2005, no.\ 23 (2005): 1427-37.

\bibitem{SchSurv} 
Schmitt, A. H. W.
``Moduli spaces for principal bundles.'' 
in Moduli spaces and vector bundles, 388-423, 
London Math.\ Soc.\ Lecture Note Ser., vol.\ 359, Cambridge Univ. Press, Cambridge, 2009. 

\bibitem{SchBook} 
Schmitt, A. H. W.
``Geometric invariant theory and decorated principal bundles.'' 
Zurich Lectures in Advanced Mathematics, European Mathematical Society, Z\"urich, 2008, vii+389~pp.

\bibitem{Sesh} 
Seshadri, C. S.
``Fibr\'es vectoriels sur les courbes alg\'ebriques.'' 
notes written by J.-M.\ Drezet from a course at the \'Ecole Normale Sup\'erieure, June 1980, Ast\'erisque, vol.\ 96, Soci\'et\'e Math\'ematique de France, Paris, 1982, 209 pp.

\bibitem{Sesh2} 
Seshadri, C. S.
``Moduli spaces of torsion free sheaves on nodal curves and generalisations I.'' 
in \it Moduli spaces and vector bundles\rm, 484-505, 
London Math.\ Soc.\ Lecture Note Ser., vol.\ 359, Cambridge Univ.\ Press, Cambridge, 2009.

\bibitem{Sol} 
Solis, P.
``A complete degeneration of the moduli of $G$-bundles on a curve.''
arXiv:1311.6847v2 (2013) 22 pp.

\bibitem{TiB} 
Teixidor i Bigas, M.
``Existence of coherent systems.''
Internat.\ J.\ Math.\ 19 (2008): 449-54.

\bibitem{Vasc}
Vasconcelos, W. V.
``Reflexive modules over Gorenstein rings.''
Proc.\ Amer.\ Math.\ Soc.\ 19 (1968): 1349-55. 

\end{thebibliography}
\end{document}